\newcommand{\abs}[1]{\lvert #1 \rvert}
\newcommand{\labs}[1]{\left| #1 \right|}
\newcommand{\scalarh}[2]{\langle #1 \,|\, #2 \rangle}
\newcommand{\scalarl}[2]{( #1 \,|\, #2 )}
\newcommand{\scalar}[2]{( #1 , #2 )}
\newcommand{\rscalarh}[2]{\Re \langle #1 \,|\, #2 \rangle}
\newcommand{\rscalarl}[2]{\Re ( #1 \,|\, #2 )}
\newcommand{\normh}[1]{\lVert #1 \rVert}
\newcommand{\norml}[2][2]{\labs{#2}_{#1}}
\newcommand{\settc}[2]{\bigl\{\,#1 \bigm\vert #2\,\bigr\}}
\newcommand{\tow}{\rightharpoonup}
\newcommand{\malf}{\boldsymbol{\alpha}}
\DeclareMathOperator{\esp}{e}
\newcommand{\J}[1][w]{J_{#1}^{\lambda}}
\newcommand{\dJ}[2][w]{dJ_{#1}^{\lambda}(\eta)[#2]}
\newcommand{\aeta}{a(\eta)}
\newcommand{\daeta}[1]{da(\eta)[#1]}
\newcommand{\E}[1][]{\mathcal{E}_{#1\lambda}}
\newcommand{\Eu}{\mathcal{E}_{1}}
\newcommand{\etl}{\eta_{\theta\lambda}}
\newcommand{\pel}{\psi_{\theta\lambda}}
\numberwithin{equation}{section}
\newtheorem{thm}[equation]{Theorem}
\newtheorem{lem}[equation]{Lemma}
\newtheorem{prop}[equation]{Proposition}
\theoremstyle{remark}
\newtheorem{rem}[equation]{Remark}
\title{Normalized solutions for a nonlinear Dirac equation}
\author[Coti Zelati]{Vittorio Coti Zelati}
\email[Coti Zelati]{vittorio.cotizelati@unina.it}
\address[Coti Zelati]{Dipartimento di Matematica Pura e Applicata 
``R.~Caccioppoli''\\
Universit\`a di Napoli ``Federico II''\\
via Cintia, M.S.~Angelo\\
80126 Napoli (NA), Italy}
\author{Margherita Nolasco} 
\email[Nolasco]{nolasco@univaq.it}
\address[Nolasco]{Dipartimento di Ingegneria e Scienze
dell'informazione e Matematica\\
Universit\`a dell'Aquila\\
via Vetoio, Loc.~Coppito\\
67010 L'Aquila (AQ) Italia}
\subjclass{35Q40 35P30 47J10 49J35}
\keywords{Nonlinear Dirac equation, Critical point theory, Min-Max
methods, Normalized solutions}
\thanks{Margherita Nolasco was partially supported by the Grant
MIUR-PRIN-20227HX33Z – ``Pattern formation in nonlinear phenomena''}
\begin{document}

\begin{abstract}
	We prove the existence of a normalized, stationary solution $\psi
	\colon \mathbb{R}^{3} \to \mathbb{C}^{4}$ with frequency $\omega >
	0$ of the nonlinear Dirac equation. The result covers the case in
	which the nonlinearity is the gradient of a function of the form
	\begin{equation*}
		F(\Psi) = a\abs{\scalar{\Psi}{\gamma^{0}
		\Psi}}^{\frac{\alpha}{2}} + b\abs{\scalar{\Psi}{\gamma^{1}
		\gamma^{2} \gamma^{3} \Psi}}^{\frac{\alpha}{2}}
	\end{equation*}
	with $\alpha \in (2,\frac{8}{3}]$, $b \geq 0$ and $a > 0$
	sufficiently small. Here $\gamma^{i}$, $i = 0,\ldots, 3$ are the 
	$4 \times 4$ Dirac's matrices.

	We find the solution as a critical point of a suitable functional
	restricted to the unit sphere in $L^{2}$, and $\omega$ turns out
	to be the corresponding Lagrange multiplier.
\end{abstract}

\maketitle

\section{Introduction}

The nonlinear Dirac equation is a simplified model describing
self-interacting fermions (electrons). The equation we consider is the
following:
\begin{equation}
	\label{eq:equazioneCompleta}
	(-i\gamma^{\mu} \partial_{\mu} + m) \Psi = \gamma \beta \nabla
	F(\Psi) \qquad \text{in } \mathbb{R} \times \mathbb{R}^{3}
\end{equation}
where $\Psi \colon \mathbb{R} \times \mathbb{R}^{3} \to
\mathbb{C}^{4}$, $m > 0$ is the mass of the electron, $\gamma^{\mu}$
the $4 \times 4$ Dirac matrices
\begin{equation*}
	\gamma^{0} = \beta = 
	\begin{pmatrix}	
		I & 0 \\
		0 & -I
	\end{pmatrix}
	\qquad
	\gamma^{k} = \beta\alpha^{k}
	\begin{pmatrix}	
		0 & \sigma^{k} \\
		-\sigma^{k} & 0
	\end{pmatrix}, \quad k = 1, \ldots, 3,
\end{equation*}
$\sigma^{k}$ the $2 \times 2$ Pauli matrices
\begin{equation*}
	\sigma^{1} = 	
	\begin{pmatrix}	
		0 & 1 \\
		1 & 0
	\end{pmatrix},
	\qquad
	\sigma^{2} = 	
	\begin{pmatrix}	
		0 & -i \\
		i & 0
	\end{pmatrix},
	\quad
	\sigma^{3} = 	
	\begin{pmatrix}	
		1 & 0 \\
		0 & -1
	\end{pmatrix},
\end{equation*}
and $\gamma \in \mathbb{R}$ is a constant, $\gamma > 0$. The nonlinear
self-interaction of the electron is described by the function $\gamma
\nabla F(\Psi)$. One can find a discussion of the motivation and
history of the models leading to the nonlinear Dirac equation in
\cite{Ranada_1982} where in particular the model nonlinearity
\begin{equation*}
	F(\Psi) = \abs{\scalar{\Psi}{\beta \Psi}}^{\frac{\alpha}{2}} +
	b\abs{\scalar{\Psi}{\gamma^{1} \gamma^{2} \gamma^{3}
	\Psi}}^{\frac{\alpha}{2}}
\end{equation*}
is discussed with $\alpha = 4$ (in case $b = 0$ this is the so called
Soler model).

We prove existence of localized stationary solutions of fixed $L^{2}$
norm for a class of nonlinearities which includes those of the form
above provided $\alpha \in (2,\frac{8}{3}]$ (see below the precise
assumptions).

A normalized stationary solution is a solution of the form
\begin{equation*}
	\Psi(t,x) = \esp^{-i\omega t} \psi(x), \qquad
	\int_{\mathbb{R}^{3}} \abs{\Psi}^{2} = 1.
\end{equation*}
Assuming $F(\esp^{i\theta} \phi) = F(\phi)$ for all $\theta \in 
\mathbb{R}$, $\phi \in \mathbb{C}^{4}$ we have that $\psi$ solves
\begin{equation}
	\label{eq:main}
	\begin{cases}
		\left(-i \malf \cdot \nabla + m \beta - \omega\right) \psi =
		\gamma \nabla F(\psi) & \\
		\int \abs{\psi}^{2} = 1 & 
	\end{cases}.
\end{equation}

We will find solutions for problem \eqref{eq:main} as critical points
of a suitable functional under the constraint of fixed $L^{2}$ norm.
The frequency $\omega$ will arise as the Lagrange multiplier
associated to the constraint. 

The use of variational methods in the study of the existence of
stationary solutions of nonlinear equations and systems involving the
Dirac operator goes back to the pioneering work of Esteban and
S\'er\'e in \cite{Esteban_Sere_1995}, and several later developments,
see for example \cite{Bartsch_Ding_2006, Ding_Wei_2008}. Also related
problems like the Maxwell-Dirac and Klein-Gordon-Dirac problems have
been tackled using variational methods: see the review paper
\cite{Esteban_Lewin_Sere_2008} where many of these results are
presented. In all these works the frequency $\omega$ is assigned, and
the typical result is that for any $\omega \in (0,m)$ there is a
solution of the equation.

The search of normalized solutions, in particular such that $\int
\abs{\psi}^{2} = 1$, is quite natural since $\abs{\Psi(t,x)}^{2}$ can
be interpreted as the probability density of the position of the
electron at point $x$ and time $t$. The problem is also an interesting
mathematical one, and a lot of work has been devoted to finding
normalized solutions for nonlinear Schr\"odinger equations, see
\cite{Stuart_1982, Cazenave_Lions_1982, Berestycki_Lions_1983,
Lions_1984, Lions_1984-1} and more recently \cite{Jeanjean_1997,
Jeanjean_Lu_2019, Hirata_Tanaka_2019, Molle_Riey_Verzini_2022,
Dovetta_Serra_Tilli_2023}. The main approach to finding normalized
solutions for this class of problems is by minimization (usually a
nontrivial one) or other min-max procedures of a suitable functional
on the unit sphere of $L^{2}$.

Very little is known about the existence via variational methods of
normalized solutions for equations involving the Dirac operator. In
this case the functional associated is strongly indefinite and a
direct minimization on the sphere in $L^{2}$ is not possible.

A first result on this topic is the one by Buffoni and Jeanjean
\cite{Buffoni_Jeanjean_1993}, dealing with semilinear elliptic
equations giving rise to a strongly indefinite functional. Later
Esteban and S\'er\'e in \cite{Esteban_Sere_1999} proved the existence
of infinitely many solutions for the Dirac-Fock equation, which
describes the self-interaction of $N$ electrons around a fixed
nucleus. A solution for this problem is a set of $N$ orthonormal
functions which solve an equation with the Dirac operator and a
nonlocal nonlinear term. The orthonormal solutions are found with
variational methods using a penalization method. A similar
penalization method has been used in the paper
\cite{Buffoni_Esteban_Sere_2006} to find normalized solutions for a
strongly indefinite problem not involving the Dirac operator.

Recent results on the existence of normalized solutions for
strongly indefinite problems involving the Dirac operator are
contained in \cite{CotiZelati_Nolasco_2019}, dealing with the problem
of an electron interacting with a nuclei and its own electric field,
\cite{Nolasco_2021} where the existence of a normalized solutions for
the Dirac-Maxwell system is proved, \cite{CotiZelati_Nolasco_2023}
dealing with a Klein-Gordon-Dirac system and \cite{Ding_Yu_Zhao_2023},
where Ding, Yu and Zhao consider a nonlinearity of the form $F(\phi) =
K(x) \abs{\phi}^{\alpha}$, $\alpha \in (2,\frac{8}{3})$ with $K(x) \to
0$ as $\abs{x} \to +\infty$. 

Let us now state our assumptions: we will assume that $F \in
C^{2}(\mathbb{C}^{4},\mathbb{R})$ is such that the following holds for
some $\alpha \in (2,\frac{8}{3}]$ and $R > 0$
\begin{align}
	&\nabla F(0) = D^{2}F(0) = 0, \label{eq:ipotesi0} \tag{H1}\\
	&\abs{D^{2}F(\phi)} \leq \abs{\phi}^{\alpha-2} \quad \text{ for all }
	\abs{\phi} \geq R, \label{eq:ipotesiD2F}\tag{H2}\\
	&\scalar{\nabla F(\phi)}{\phi} \geq \alpha F(\phi) \geq 0 \quad
	\text{ for all } \phi \in \mathbb{C}^{4},
	\label{eq:ipotesiAR}\tag{H3}
\end{align}
and also that, for some $\rho > 0$, $\bar{\gamma} > 0$ and $\nu \in
(\frac{\alpha}{2}, \frac{3}{2})$
\begin{align}
	&F(\phi) \geq \bar{\gamma} \abs{\scalar{\phi}{\beta
	\phi}}^{\frac{\alpha}{2}} \quad \quad \text{ for all }
	\phi \in \mathbb{C}^{4}, \ \abs{\phi} < \rho,
	\label{eq:ipotesiStimaFdalbasso} \tag{H4} \\
	&\abs{\nabla F(\phi)} \leq \abs{\phi}^{\nu} \quad \text{ for all }
	\phi \in \mathbb{C}^{4}, \ \abs{\phi} < \rho. \tag{H5}
	\label{eq:ipotesistimegradbetter}
\end{align}
We also assume that $\xi > 3$ is such that for all $\zeta > 0$ there
is a $C_{\zeta}$ such that for all $\phi \in \mathbb{C}^{4}$
\begin{equation}
	\tag{H6}
	\abs{\nabla F(\phi)} \leq \left( \zeta + C_{\zeta} 
	F(\phi)^{1/\xi}\right) \abs{\phi}. 
	\label{eq:ipotesiES}
\end{equation}

\begin{rem}
	\label{rem:ipotesi}
	Let us remark that the assumptions \eqref{eq:ipotesi0},
	\eqref{eq:ipotesiD2F}, \eqref{eq:ipotesiAR}, \eqref{eq:ipotesiES}
	are the same as in the paper \cite{Esteban_Sere_1995}. The
	restriction on the exponent $\alpha$ is quite natural when looking
	for normalized solutions. The assumption
	\eqref{eq:ipotesiStimaFdalbasso} is similar to assumption (H9) of
	the same paper \cite{Esteban_Sere_1995}. 
	
	Let us also remark that \eqref{eq:ipotesistimegradbetter} holds if
	\begin{equation*}
		\abs{D^{2}F(\phi)} \leq \abs{\phi}^{\alpha-2} \quad \text{ for all }
		\phi \in \mathbb{C}^{4}.
	\end{equation*}
\end{rem}

\begin{rem}
	Follows from \eqref{eq:ipotesi0} and \eqref{eq:ipotesiD2F} that
	for all $\epsilon > 0$ there are $\mu_{\epsilon}$ and
	$\delta_{\epsilon}$ such that, for all $\phi$, $\varphi \in \mathbb{C}^{4}$
	\begin{align}
		&\abs{D^2 F(\phi)} \leq \epsilon + \mu_{\epsilon}
		\abs{\phi}^{\alpha-2} \label{eq:stimeseceps}\\
		&\abs{\nabla F(\phi)} \leq \epsilon \abs{\phi} + \mu_{\epsilon}
		\abs{\phi}^{\alpha-1} \label{eq:stimegradfeps}\\
		&0 \leq F(\phi) \leq \frac{\epsilon}{2} \abs{\phi}^{2} +
		\frac{\mu_{\epsilon}}{\alpha} \abs{\phi}^{\alpha}
		\label{eq:stimeFeps}\\
		&\abs{\nabla F(\phi+\varphi) - \nabla F(\phi)} \leq
		\left(\epsilon + \mu_{\epsilon} (\abs{\phi}^{\alpha-2} +
		\abs{\varphi}^{\alpha-2})\right) \abs{\varphi}
		\label{eq:stimaDiffgradeps}
	\end{align}
	We will write $\mu = \mu_{1}$ and use the fact that, for all
	$\phi$, $\varphi \in \mathbb{C}^{4}$
	\begin{align}
		&\abs{D^2 F(\phi)} \leq 1 + \mu \abs{\phi}^{\alpha-2}
		\label{eq:stimeF_due} \\
		&\abs{\nabla F(\phi)} \leq \abs{\phi} + \mu 
		\abs{\phi}^{\alpha-1} \label{eq:definiscimu}\\
		&0 \leq F(\phi) \leq \frac{1}{2} \abs{\phi}^{2} +
		\frac{\mu}{\alpha} \abs{\phi}^{\alpha} \label{eq:stimeF}\\
		&\abs{\nabla F(\phi+\varphi) - \nabla F(\phi)} \leq \left(1 +
		\mu (\abs{\phi}^{\alpha-2} + \abs{\varphi}^{\alpha-2})\right)
		\abs{\varphi}.
		\label{eq:stimaDiffgrad}
	\end{align}
\end{rem}

Our final assumptions give the size of the coefficient $\gamma$ in
front of the nonlinear term $\nabla F$. We let that $\gamma_{0} > 0$
be such that
\begin{align}
	&\gamma_{0} \left(S_{2}^{2} + \mu S_{3}^{3(\alpha-2)}
	S_{2}^{{3\alpha-8}} \right) < \frac{1}{16},
	\label{eq:ipotesigamma1} \\
	&\gamma_{0} \left(S_{2}^{2} + \mu
	S^{2}_{\frac{4}{4-\alpha}}\right) < \frac{1}{8}
	\label{eq:ipotesigamma2}
\end{align}
where $m > 0$ is the mass of our electron, $\mu$ is the constant in
\eqref{eq:stimeF_due}-\eqref{eq:stimaDiffgrad}, $S_{q}$ is the
constant relative to the Sobolev embedding of $H^{1/2}$ into $L^{q}$
(with the norm \eqref{eq:sobolevnorm}, equivalent to the usual one)
and $\alpha$ is the exponent in \eqref{eq:ipotesiD2F}.

Our result is
\begin{thm}
	\label{thm:main}
	Let $m > 0$, assume $F$ satisfies
	\eqref{eq:ipotesi0}--\eqref{eq:ipotesiES}, and let
	$\gamma \in (0,\gamma_{0}]$.
	
	Then there is $\omega \in (0,m)$ and $\psi \in
	H^{1/2}(\mathbb{R}^{3}, \mathbb{C}^{4})$ solutions of problem
	\eqref{eq:main}.
\end{thm}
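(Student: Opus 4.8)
The energy functional attached to \eqref{eq:main} is strongly indefinite, so I would follow the reduction scheme of Esteban--S\'er\'e, adapted to the normalized (constrained) setting. Work in $H^{1/2}(\mathbb{R}^{3},\mathbb{C}^{4})$ with the equivalent norm induced by $\labs{D_{0}}$, where $D_{0} := -i\malf \cdot \nabla + m\beta$, and decompose $H^{1/2} = H^{+} \oplus H^{-}$ along the positive and negative spectral subspaces of $D_{0}$. Since the spectrum of $D_{0}$ is $(-\infty,-m] \cup [m,+\infty)$, on $H^{-}$ one has $\scalar{D_{0}\psi}{\psi} \leq -m\,\normh{\psi}_{L^{2}}^{2}$, so the quadratic form is negative definite there irrespective of any frequency. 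The plan is to study the functional $\mathcal{J}(\psi) = \tfrac{1}{2}\scalar{D_{0}\psi}{\psi} - \gamma \int F(\psi)$ restricted to the $L^{2}$-sphere; the strong indefiniteness lives entirely in the infinite-dimensional $H^{+}$ part, and the Lagrange multiplier of the constraint will be the sought frequency $\omega$.

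First I would eliminate the negative directions. By the convexity \eqref{eq:ipotesiFconvex} the map $v \mapsto \mathcal{J}(\eta+v)$ is concave on $H^{-}$, and the mass gap together with the smallness of $\gamma$ encoded in \eqref{eq:ipotesigamma1}--\eqref{eq:ipotesigamma2} and the growth controls \eqref{eq:stimeF}, \eqref{eq:stimaDiffgrad} keep it coercive there. Hence for each $\eta \in H^{+}$ there is a unique maximizer $\aeta \in H^{-}$, depending smoothly on $\eta$ with derivative $\daeta{\cdot}$ computed by implicit differentiation. This produces, at a chosen scale, a reduced functional $\E$ of class $C^{1}$ on $H^{+}$ whose critical points are in one-to-one correspondence with the solutions of the first equation in \eqref{eq:main}.

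Next I would locate a nontrivial critical point of the reduced functional by a mountain-pass/linking argument carried out on the $L^{2}$-sphere. The small-$\psi$ behaviour from \eqref{eq:ipotesi0}, \eqref{eq:stimeF}, the super-quadratic charge lower bound \eqref{eq:ipotesiStimaFdalbasso}, and the gradient estimate \eqref{eq:ipotesistimegradbetter} furnish the required geometry, while the Ambrosetti--Rabinowitz-type condition \eqref{eq:ipotesiAR} forces Cerami sequences to be bounded. The decisive point is compactness: because the problem is autonomous on $\mathbb{R}^{3}$ one must translate the sequences and run a concentration--compactness/splitting analysis, using the subcriticality $\alpha < 3$ of $H^{1/2} \hookrightarrow L^{\alpha}$ and \eqref{eq:ipotesiStimaFdalbasso} to exclude vanishing, and \eqref{eq:ipotesiES} to keep the recovered limit from drifting to the edge of the spectral gap. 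This yields a critical point, hence a solution of the first equation with an associated multiplier $\omega$.

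Finally the constraint $\int \abs{\psi}^{2} = 1$ must be met and $\omega$ must be placed in $(0,m)$. Here I would exploit the $L^{2}$-preserving dilation $\pel$ and its positive part $\etl$, comparing the scaled energies $\E$ and $\Eu$: under this dilation the kinetic term scales like $\lambda$ while the nonlinearity scales like $\lambda^{3(\alpha-2)/2}$, so the hypothesis $\alpha \leq \tfrac{8}{3}$ is exactly the mass-subcritical threshold that makes the constrained min-max level well defined and attained and forces the multiplier into the gap. Equivalently, one checks that the $L^{2}$-norm of the solution varies continuously with the scale and sweeps across the value $1$, selecting the admissible $\omega$ by an intermediate-value argument. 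I expect the main obstacle to be the compactness analysis of the third step, performed uniformly enough in the scale that the multiplier produced stays strictly inside $(0,m)$ rather than collapsing to $0$ or to $m$.
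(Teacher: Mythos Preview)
Your reduction step is close in spirit to the paper's, but the constrained maximization is not set up correctly. The paper does \emph{not} maximize $\mathcal{J}(\eta+v)$ freely over $v\in H^{-}$; it parametrizes the sphere $\norml{\psi}^{2}=\lambda$ by $\psi=a(\eta)w+\eta$ with $w\in\Sigma_{+}$, $\eta\in B_{\lambda}\subset X_{-}$ and $a(\eta)=\sqrt{\lambda-\norml{\eta}^{2}}$, and maximizes the resulting functional $J_{w}^{\lambda}(\eta)$ over $\eta$. The concavity you invoke for the unconstrained problem does not directly give concavity of $J_{w}^{\lambda}$; the paper needs a separate estimate (Lemma~\ref{lem:stime_grad}, equation~\eqref{eq:stimaFw}) together with \eqref{eq:ipotesigamma1} to establish strict concavity in the region $\{J_{w}^{\lambda}\ge 0,\ \norml{\eta}^{2}<\lambda/2\}$, and then Palais--Smale plus Ekeland to reach the unique maximizer.

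More importantly, the variational step after reduction is misidentified. The reduced functional $\mathcal{E}_{\lambda}$ on $\Sigma_{+}$ is bounded below and is \emph{minimized}, not attacked by a mountain-pass/linking argument; there is no mountain-pass geometry here. The hypotheses \eqref{eq:ipotesiStimaFdalbasso}--\eqref{eq:ipotesistimegradbetter} are not used to build a linking structure but to prove, via an explicit test function $w_{\epsilon}(x)=\epsilon^{3/2}w(\epsilon x)$, the strict inequality $e(\lambda)<\tfrac{\lambda m}{2}$ (Lemma~\ref{lem:stimAe}). This inequality is what places the Lagrange multiplier in the gap: one shows directly $\omega_{n}\le 2\mathcal{E}_{1}(w_{n})\to 2e(1)<m$, so no intermediate-value argument on a scale parameter is needed (or available). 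Your reading of $\pel$ and $\etl$ as dilations is a misinterpretation of the macros: they denote the maximizer and the corresponding $\psi$ at mass level $\theta\lambda$, not rescaled functions.

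Finally, the compactness mechanism you sketch is incomplete. Vanishing is excluded using $e(1)<\tfrac{m}{2}$ (not \eqref{eq:ipotesiStimaFdalbasso} directly), and dichotomy is excluded via the strict subadditivity $e(\theta\lambda)<\theta\, e(\lambda)$ for $\theta>1$ (Proposition~\ref{prop:stimee}), whose proof hinges on \eqref{eq:ipotesiAR} and \eqref{eq:ipotesiES} and on comparing the minimizers at levels $\lambda$ and $\theta\lambda$. Without this subadditivity lemma the concentration--compactness argument cannot close, and it does not appear in your outline.
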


As far as we know this is the first result on the existence of
\emph{normalized} solutions for Dirac equation with a nonlinear
interaction of Soler type. Esteban and S\'er\'e in
\cite{Esteban_Sere_1995} deal with essentially the same equation, and
find for all assigned $\omega \in (0,m)$ a solution which is not
normalized. With respect to the result of \cite{Ding_Yu_Zhao_2023} we
have a different nonlinear term, and in particular our result covers
the case in which $F(\phi)$ is a pure power
$\frac{1}{\alpha(\alpha-1)} \abs{\phi}^{\alpha}$ or
$\frac{1}{\alpha(\alpha-1)} \abs{\scalar{\phi}{\beta
\phi}}^{\alpha/2}$, with exponent $\alpha \in (2,\frac{8}{3}]$.

We will find such a solution as a critical point of the functional 
\begin{equation*}
	I(\psi) = \frac{1}{2} \int_{\mathbb{R}^{3}} \scalar{H\psi}{\psi} -
	\gamma \int_{\mathbb{R}^{3}} F(\psi)
\end{equation*}
restricted on the manifold $\norml{\psi}^{2} = 1$.
Here 
\begin{equation*}
	H = -i \malf \cdot \nabla + m\beta.
\end{equation*}

The solutions are found as critical points of the strongly indefinite
functional $I$ restricted to the unit sphere in $L^{2}$, and,
following the method introduced in \cite{CotiZelati_Nolasco_2019,
Nolasco_2021, CotiZelati_Nolasco_2023}, the solution will be found via
the following $\min$-$\max$ procedure: we fix an $L^{2}$ normalized
function $w$ in the positive energy subspace of $H$ and we maximize
$I$ over the ball of radius $1$ in the subspace spanned by $w$ and the
negative energy subspace of $H$. Such a maximum being unique and a
smooth function $\psi(w)$, we then proceed to find a minimizer of $w
\mapsto I(\psi(w))$.

Let us also point out that $F(\phi)$ is not coercive but only
satisfies assumption \eqref{eq:ipotesiStimaFdalbasso}, making it
harder to deduce the estimates necessary to prove the result.

\section{Notation and background results}

We denote with $\abs{u}^{p}_{p} = \int_{\mathbb{R}^{3}}
\abs{u(x)}^{p}$ the norm in $L^{p}(\mathbb{R}^{3}, \mathbb{C}^{4})$
and with $\scalarl{u}{v} = \int_{\mathbb{R}^{3}} \scalar{u(x)}{v(x)}$
the scalar product in $L^{2}(\mathbb{R}^{3}, \mathbb{C}^{4})$, where
$\scalar{\xi}{\eta} = \sum_{i=1}^{4} \bar{\xi}_{i}\eta_{i}$ and
$\abs{\xi}^{2} = \scalar{\xi}{\xi}$ are the scalar product and the
norm in $\mathbb{C}^{4}$. With $\Re z$ we denote the real part of $z 
\in \mathbb{C}$.

We will work in the Hilbert space $X = H^{1/2}(\mathbb{R}^{3}, 
\mathbb{C}^{4})$ with scalar product 
\begin{equation*}
	\scalarh{\psi_{1}}{\psi_{2}} = \int_{\mathbb{R}^{3}}
	\sqrt{\abs{\xi}^{2} + m^{2}} \, \scalar{\hat{\psi}_{1}(\xi)}
	{\hat{\psi}_{2}(\xi)} \, d\xi
\end{equation*}
and corresponding norm 
\begin{equation}
	\label{eq:sobolevnorm}
	\normh{\psi}^{2} = \int_{\mathbb{R}^{3}} \sqrt{\abs{\xi}^{2} + 
	m^{2}}
	\, \abs{\hat{\psi}(\xi)}^{2} \, d\xi.
\end{equation}
where $\hat{\psi}(\xi) = \mathcal{F}\psi(\xi)$ is the Fourier
transform of $\psi \in H^{1/2}(\mathbb{R}^{3}, \mathbb{C}^{4})$. The
norm in \eqref{eq:sobolevnorm} is equivalent to the usual one (given
by \eqref{eq:sobolevnorm} with $m =1$).

Let us also recall some properties (see \cite{Thaller_1992}) of the
Dirac operator
\begin{equation*}
	H = -i \malf \cdot \nabla + m\beta.
\end{equation*}
$H$ is a first order, self-adjoint operator on $H^{1}(\mathbb{R}^{3},
\mathbb{C}^{4})$ with purely absolutely continuous spectrum given by
\begin{equation*}
	\sigma(H) = (-\infty,-m] \cup [m,+\infty).
\end{equation*}
One can define orthogonal projectors $\Lambda_{\pm}$ on the positive
and negative part of the spectrum of $H$. These projections are such
that
\begin{equation*}
	H\Lambda_{\pm} = \Lambda_{\pm} H = \pm \sqrt{-\Delta + m^{2}}
	\Lambda_{\pm} = \pm \Lambda_{\pm} \sqrt{-\Delta + m^{2}}
\end{equation*}
and 
\begin{equation*}
	\scalarh{\Lambda_{+}\psi}{\Lambda_{-}\eta} = 
	\scalarl{\Lambda_{+}\psi}{\Lambda_{-}\eta} = 0
\end{equation*}
so that
\begin{align*}
	\int \scalar{\psi(x)}{H \psi (x)} \, dx &= \int
	\scalar{\Lambda_{+}\psi(x)}{\Lambda_{+}H \psi (x)} \, dx + \int
	\scalar{\Lambda_{-}\psi(x)}{\Lambda_{-}H \psi (x)} \, dx \\
	&= \norml{(-\Delta + 
	m^{2})^{1/4} \Lambda_{+}\psi}^{2} - \norml{(-\Delta + 
	m^{2})^{1/4} \Lambda_{-}\psi}^{2} \\
	&= \normh{\Lambda_{+} \psi}^{2} - \normh{\Lambda_{-} \psi}^{2}.
\end{align*}
We also let $X_{\pm} = \Lambda_{\pm} X$ and, for $\lambda \in (0,1]$,
$\Sigma^{\lambda} = \settc{\psi \in X}{\norml{\psi}^{2} = \lambda}$
and $\Sigma_{\pm}^{\lambda} = \settc{\psi \in
X_{\pm}}{\norml{\psi}^{2} = \lambda}$, $\Sigma_{\pm} =
\Sigma_{\pm}^{1}$.

With $S_{p}$ we will denote the best constant for the Sobolev
embedding of $H^{1/2}(\mathbb{R}^{3},\mathbb{C}^{4})$ (with norm given
by \eqref{eq:sobolevnorm}) in $L^{p}(\mathbb{R}^{3}; \mathbb{C}^{4})$
for $2 \leq p \leq 3$:
\begin{equation*}
	\norml[p]{\psi} \leq S_{p}\normh{\psi}.
\end{equation*}
Let us remark that, with our choice of the norm in $X$, we have that
$m\norml{\psi}^{2} \leq \normh{\psi}^{2}$ and $S_{2} \leq
\frac{1}{\sqrt{m}}$ (actually $S_{2} = \frac{1}{\sqrt{m}}$).

\section{Maximization}

We are interested in solutions of problem \eqref{eq:main}, which are
functions having $L^{2}$ norm equal one. In order to find them, we
will study this problems under the constraint of $L^{2}$ norm equal to
$\lambda$, with $\lambda \in (0,1]$, as in \cite{Lions_1984}.

Let $I \colon H^{1/2}(\mathbb{R}^{3}, \mathbb{C}^{4}) \to \mathbb{R}$ 
\begin{equation*}
	I(\psi) = \frac{1}{2} \normh{\Lambda_{+}\psi}^{2} - \frac{1}{2}
	\normh{\Lambda_{-}\psi}^{2} - \gamma \int_{\mathbb{R}^{3}}
	F(\psi).
\end{equation*}

We denote, for $\lambda \in (0,1]$
\begin{equation*}
	B_{\lambda} = \settc{\eta \in X_{-}}{\norml{\eta}^{2} < \lambda}.
\end{equation*}
For $\eta\in B_{\lambda}$ and $w \in \Sigma_{+}$ we let
\begin{equation*}
	\aeta = \sqrt{\lambda - \norml{\eta}^{2}} \qquad \text{ and }
	\qquad \psi = \aeta w + \eta \in \Sigma^{\lambda}.
\end{equation*}

We will look, given $w \in \Sigma_{+}$, for a maximizer of the
functional $\J$ defined on $B_{\lambda}$
\begin{equation*}
	\J(\eta) = I(\aeta w + \eta) = \frac{1}{2} \normh{\aeta w}^{2} -
	\frac{1}{2} \normh{\eta}^{2} - \gamma \int_{\mathbb{R}^{3}}
	F(\psi).
\end{equation*}

Since $\daeta{\xi} = -\aeta^{-1} \rscalarl{\eta}{\xi}$, the derivative
of $\J$ is given, for all $\xi \in X_{-}$, by
\begin{multline}
	\label{eq:derivataJ}
	\dJ{\xi} = dI(\aeta w + \eta)[\daeta{\xi}w + \xi] =
	dI(\psi)[h_{\xi}] \\
	= \rscalarh{\aeta w}{\daeta{\xi} w} - \rscalarh{\eta}{\xi} - \gamma
	\int_{\mathbb{R}^{3}} \scalar{\nabla F(\psi)}{h_{\xi}}\\
	= -\rscalarl{\eta}{\xi} \normh{w}^{2} - \rscalarh{\eta}{\xi} -
	\gamma \int_{\mathbb{R}^{3}} \scalar{\nabla F(\psi)}{h_{\xi}}
\end{multline}
(here $h_{\xi} = \daeta{\xi}w + \xi$) and, in particular
\begin{equation*}
	\dJ{\eta} = -\norml{\eta}^{2} \normh{w}^{2} - \normh{\eta}^{2} -
	\gamma \int_{\mathbb{R}^{3}} \scalar{\nabla F(\psi)}{h_{\eta}}.
\end{equation*}

\begin{lem}
	\label{lem:stime_grad}
	For all $w \in \Sigma_{+}$ and $\eta \in B_{\lambda}$ we have
	\begin{equation}
		\label{eq:bdd}
		\normh{\eta}^{2} \leq \aeta^{2} \normh{w}^{2} - 2\J(\eta),
	\end{equation}
	and for all $\eta \in B_{\lambda}$ such that $\J(\eta) \geq 0$ 
	we have that
	\begin{equation}
		\label{eq:stimaFw}
		\frac{1}{a(\eta)} \labs{\int \scalar{\nabla F(\psi)}{w}} \leq
		C_{\lambda,\alpha} \normh{w}^{2},
	\end{equation}
	where 
	\begin{equation*}
		C_{\lambda,\alpha} = 4 \left(S_{2}^{2} + \mu
		\lambda^{\frac{\alpha-2}{2}} S_{3}^{3(\alpha-2)}
		S_{2}^{{3\alpha-8}} \right).
	\end{equation*}
	
	Moreover, if $\J(\eta) \geq 0$, $\norml{\eta}^{2} \geq
	\frac{\lambda}{2}$ we have that
	\begin{equation}
		\label{eq:spinge}
		\dJ{\eta} < - \normh{\eta}^{2} < 0.
	\end{equation}
\end{lem}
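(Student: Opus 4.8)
The plan is to prove the three displayed inequalities in turn: \eqref{eq:bdd} is immediate, \eqref{eq:stimaFw} is the technical core, and \eqref{eq:spinge} will follow by combining the other two with the smallness of $\gamma$.

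For \eqref{eq:bdd} I would simply discard the nonlinear term. Since $F\geq0$ by \eqref{eq:ipotesiAR}, the definition of $\J$ gives $\J(\eta)\leq\frac12\aeta^2\normh{w}^2-\frac12\normh{\eta}^2$, and rearranging is exactly \eqref{eq:bdd}. I would at once record the consequences used below: if $\J(\eta)\geq0$, then \eqref{eq:bdd} yields $\normh{\eta}\leq\aeta\normh{w}$, whence $\normh{\psi}^2=\aeta^2\normh{w}^2+\normh{\eta}^2\leq2\aeta^2\normh{w}^2$; moreover $\norml{w}=1$ forces $\normh{w}\geq1/S_2$, and $\aeta\leq\sqrt{\lambda}$.

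For \eqref{eq:stimaFw} I would start from \eqref{eq:definiscimu}, $\abs{\nabla F(\psi)}\leq\abs{\psi}+\mu\abs{\psi}^{\alpha-1}$, and split the integral. For the linear part, $\abs{\psi}\leq\aeta\abs{w}+\abs{\eta}$, Cauchy--Schwarz, $\norml{w}=1$, $\norml{\eta}\leq S_2\normh{\eta}\leq S_2\aeta\normh{w}$ and $\normh{w}\geq1/S_2$ give $\frac1{\aeta}\int\abs{\psi}\,\abs{w}\leq\norml{w}^2+\frac1{\aeta}\norml{\eta}\norml{w}\leq2S_2^2\normh{w}^2$. The superlinear part $\frac{\mu}{\aeta}\int\abs{\psi}^{\alpha-1}\abs{w}$ is the heart of the matter: I would apply Hölder, interpolate $\abs{\psi}$ between $L^2$ (where $\norml{\psi}=\sqrt{\lambda}$) and $L^3$ (where $\norml[3]{\psi}\leq S_3\normh{\psi}\leq\sqrt2\,S_3\aeta\normh{w}$), estimate $\abs{w}$ through $\norml{w}=1$ and the Sobolev embedding, and finally homogenize the surviving powers of $\aeta$ and $\normh{w}$ into the stated powers of $\lambda$ and $S_2$ by means of $\aeta\leq\sqrt{\lambda}$ and $\normh{w}\geq1/S_2$. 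Choosing the interpolation so that the $L^3$ weight on $\psi$ is $3(\alpha-2)$, the $\lambda$-power is $\frac{\alpha-2}2$ and the $S_2$-power is $3\alpha-8$, uniformly for $\alpha\in(2,\frac83]$, is the delicate bookkeeping; the factor $4$ in $C_{\lambda,\alpha}$ absorbs the pure numerical constants produced on the way (at the endpoint $\alpha=\frac83$ everything collapses to $\sqrt2\,\mu\lambda^{1/3}S_3^2\normh{w}^2$, a useful consistency check). This exponent matching is the step I expect to be the main obstacle.

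Finally, for \eqref{eq:spinge} I would make $h_\eta$ explicit. Since $\daeta{\eta}=-\aeta^{-1}\norml{\eta}^2$, we have $h_\eta=-\aeta^{-1}\norml{\eta}^2\,w+\eta$, and writing $\eta=\psi-\aeta w$ together with $\aeta^2+\norml{\eta}^2=\lambda$ yields the identity
\[
\int\scalar{\nabla F(\psi)}{h_\eta}=\int\scalar{\nabla F(\psi)}{\psi}-\frac{\lambda}{\aeta}\int\scalar{\nabla F(\psi)}{w}.
\]
Substituting into the formula for $\dJ{\eta}$ and cancelling the term $-\normh{\eta}^2$, it remains to check that $\norml{\eta}^2\normh{w}^2+\gamma\int\scalar{\nabla F(\psi)}{h_\eta}>0$. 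Here $\int\scalar{\nabla F(\psi)}{\psi}\geq0$ by \eqref{eq:ipotesiAR}, while \eqref{eq:stimaFw} gives $\frac{\lambda}{\aeta}\labs{\int\scalar{\nabla F(\psi)}{w}}\leq\lambda\,C_{\lambda,\alpha}\normh{w}^2$. Using $\gamma\leq\gamma_0$ and \eqref{eq:ipotesigamma1}, which forces $\gamma_0C_{\lambda,\alpha}<\frac14$, the negative contribution is at most $\frac{\lambda}4\normh{w}^2$, whereas the hypothesis $\norml{\eta}^2\geq\frac{\lambda}2$ makes the positive term at least $\frac{\lambda}2\normh{w}^2$; hence the whole quantity exceeds $\frac{\lambda}4\normh{w}^2>0$. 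This is \eqref{eq:spinge}, the final strict inequality $-\normh{\eta}^2<0$ being guaranteed by $\eta\neq0$.
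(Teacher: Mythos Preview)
Your proposal is correct and follows essentially the same route as the paper: \eqref{eq:bdd} is obtained by discarding the nonnegative $F$-term; \eqref{eq:stimaFw} is proved via \eqref{eq:definiscimu}, Hölder, and $L^{2}$--$L^{3}$ interpolation on $\psi$ (the paper's choice is exactly the one you indicate, placing the $L^{3}$ weight $3(\alpha-2)$ on $\psi$ and then converting the residual powers of $\normh{\psi}$ and $a(\eta)$ into $\lambda^{(\alpha-2)/2}$ and $S_{2}^{3\alpha-8}$ through $\normh{\psi}^{2}\leq 2a(\eta)^{2}\normh{w}^{2}$ and $\normh{\psi}\geq\sqrt{\lambda}/S_{2}$); and \eqref{eq:spinge} is derived from the same identity $h_{\eta}=\psi-\lambda a(\eta)^{-1}w$ together with \eqref{eq:ipotesiAR}, \eqref{eq:stimaFw} and \eqref{eq:ipotesigamma1}. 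The exponent bookkeeping you flag as the main obstacle is precisely what the paper carries out, and your outline matches it.
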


\begin{proof}	
	We have that for $\eta \in B_{\lambda}$ and $\psi = \aeta w +
	\eta$
	\begin{equation*}
		\frac{1}{2} \normh{\eta}^{2} \leq \frac{1}{2}
		\normh{\eta}^{2} +  \gamma \int F(\psi)
		= \frac{1}{2} \normh{a(\eta)w}^{2} - \J(\eta)
	\end{equation*}
	and \eqref{eq:bdd} follows.

	Let us now assume that $\J(\eta) \geq 0$, so that
	$\normh{\eta}^{2} \leq \aeta^{2} \normh{w}^{2}$.
	
	Since 
	\begin{equation*}
		a(\eta) - da(\eta)[\eta] = a(\eta) +
		a(\eta)^{-1}\norml{\eta}^{2} = \lambda a(\eta)^{-1}
	\end{equation*}
	we have that
	\begin{equation*}
		\dJ{\eta} = -\norml{\eta}^{2} \normh{w}^{2} - \normh{\eta}^{2}
		- \gamma \int \scalar{\nabla F(\psi)}{\psi} 
		+ \gamma \frac{\lambda}{a(\eta)^{2}} \int \scalar{\nabla
		F(\psi)}{a(\eta)w}.
	\end{equation*}
	
	Let's estimate the last term. Using \eqref{eq:definiscimu} we have
	\begin{multline*}
		\labs{\int \scalar{\nabla F(\psi)}{a(\eta)w}} \leq \int
		\left(\abs{\psi} + \mu \abs{\psi}^{\alpha-1}\right)
		\abs{a(\eta)w} \\
		\leq \norml{\psi} \norml{a(\eta)w} + \mu
		\norml[\alpha]{\psi}^{\alpha-1} \norml[\alpha]{a(\eta)w}.
	\end{multline*}
	Since $\norml{\psi}^{2} = \lambda$ and $\norml{a(\eta)w}^{2} \leq
	\lambda$ we deduce that
	\begin{equation*}
		\norml[\alpha]{\psi}^{\alpha} = \int \abs{\psi}^{\alpha} \leq
		\int \abs{\psi}^{3\alpha-6} \abs{\psi}^{6-2\alpha} \leq
		\norml[3]{\psi}^{3\alpha-6} \norml[2]{\psi}^{6-2\alpha} =
		\lambda^{3-\alpha} \norml[3]{\psi}^{3\alpha-6}
	\end{equation*}
	and 
	\begin{equation*}
		\norml[\alpha]{a(\eta)w}^{\alpha} \leq \lambda^{3-\alpha}
		\norml[3]{a(\eta)w}^{3\alpha-6}
	\end{equation*}
	so that
	\begin{align*}
		&\labs{\int \scalar{\nabla F(\psi)}{a(\eta)w}} \leq
		\norml{\psi} \norml{a(\eta)w} + \mu \lambda^{(3-\alpha)}
		\norml[3]{\psi}^{\frac{3(\alpha-2)(\alpha-1)}{\alpha}}
		\norml[3]{a(\eta)w}^{\frac{3(\alpha-2)}{\alpha}}\\
		&\qquad\leq S_{2}^{2} \normh{\psi} \normh{a(\eta)w} + \mu
		S_{3}^{3(\alpha-2)} \lambda^{(3-\alpha)}
		\normh{\psi}^{\frac{3(\alpha-2)(\alpha-1)}{\alpha}}
		\normh{a(\eta)w}^{\frac{3(\alpha-2)}{\alpha}} \\
		&\qquad\leq S_{2}^{2} \normh{\psi}^{2} + \mu S_{3}^{3(\alpha-2)}
		\lambda^{(3-\alpha)} \normh{\psi}^{3(\alpha-2)}.
	\end{align*}
	Since $\normh{\psi}^{2} \geq S_{2}^{2}\norml{\psi}^{2} = S_{2}^{2}
	\lambda$ and $3(\alpha - 2) \in (0,2]$ for all $\alpha \in (2,
	\frac{8}{3}]$ we have that
	\begin{equation*}
		\normh{\psi}^{3(\alpha-2)} \leq
		(S_{2}\sqrt{\lambda})^{{3\alpha-8}} \normh{\psi}^{2} \leq 4
		(S_{2}\sqrt{\lambda})^{{3\alpha-8}} a(\eta)^{2} \normh{w}^{2}
	\end{equation*}
	and
	\begin{equation*}
		\frac{\lambda}{a(\eta)^{2}} \labs{\int \scalar{\nabla
		F(\psi)}{a(\eta) w} }\leq 4 \lambda \left(S_{2}^{2} + \mu
		\lambda^{\frac{\alpha-2}{2}} S_{3}^{3(\alpha-2)}
		S_{2}^{{3\alpha-8}} \right) \normh{w}^{2} = \lambda
		C_{\alpha,\lambda} \normh{w}^{2}.
	\end{equation*}
	We finally deduce that
	\begin{equation*}
		\dJ{\eta} \leq \left(-\norml{\eta}^{2} + \gamma \lambda
		C_{\alpha,\lambda} \right) \normh{w}^{2} - \normh{\eta}^{2} -
		\gamma \int \scalar{\nabla F(\psi)}{\psi} < -\normh{\eta}^{2}
	\end{equation*}
	if $\norml{\eta}^{2} > \frac{\lambda}{2}$ and since by 
	\eqref{eq:ipotesigamma1} $\gamma$ is
	such that $\gamma C_{\alpha,\lambda} < \frac{1}{4}$ for all 
	$\lambda \in (0,1]$.
\end{proof}

\begin{rem}
	\label{rem:psbdd}
	It follows from the Lemma \ref{lem:stime_grad} that if $\eta_{n}$
	is a Palais-Smale sequence for $\J$ such that $\J(\eta_{n}) \geq
	0$, then $\norml{\eta_{n}}^{2} < \frac{\lambda}{2}$ for all $n \in
	\mathbb{N}$ large enough.
\end{rem}

\begin{lem}
	\label{lem:PSvale}
	
	Let $\eta_{n} \in B_{\lambda}$ be a Palais-Smale sequence for
	$\J$, that is
	\begin{equation*}
		\J(\eta_{n}) \to c \geq 0, \qquad d\J(\eta_{n}) \to 0.
	\end{equation*}
	
	Then $\eta_{n}$ converges, up to a subsequence, to a critical 
	point $\eta$ of $\J$.
\end{lem}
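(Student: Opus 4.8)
The plan is to prove the Palais--Smale condition \emph{without} using any compact Sobolev embedding (which fails on $\mathbb{R}^{3}$), relying instead on the concave structure of $\J$ and, crucially, on the monotonicity of $\nabla F$ guaranteed by the convexity assumption \eqref{eq:ipotesiFconvex}.

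First I would check boundedness and confinement to the interior of $B_{\lambda}$. Since $\J(\eta_{n}) \to c$, estimate \eqref{eq:bdd} gives $\normh{\eta_{n}}^{2} \leq a(\eta_{n})^{2}\normh{w}^{2} - 2\J(\eta_{n})$, and as $a(\eta_{n})^{2} \leq \lambda$ the sequence is bounded in $X$. By Remark \ref{rem:psbdd} we have $\norml{\eta_{n}}^{2} < \frac{\lambda}{2}$ for $n$ large, so $a(\eta_{n})^{2} = \lambda - \norml{\eta_{n}}^{2} > \frac{\lambda}{2}$ stays bounded away from $0$. Passing to a subsequence, $\eta_{n} \tow \eta$ in $X_{-}$ (a closed, hence weakly closed, subspace) and, by weak lower semicontinuity of the $L^{2}$ norm, $\norml{\eta}^{2} \leq \frac{\lambda}{2}$; thus $\eta$ lies in the interior of $B_{\lambda}$ and $\J$ is $C^{1}$ in a neighbourhood of $\eta$.

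The heart of the proof is to test the \emph{difference} of derivatives against $\zeta_{n} := \eta_{n} - \eta \tow 0$ (weakly in $X$ and, since $H^{1/2} \hookrightarrow L^{2}$, in $L^{2}$). On one hand $d\J(\eta_{n})[\zeta_{n}] \to 0$ by the Palais--Smale hypothesis; on the other hand $d\J(\eta) \in X_{-}^{*}$ is a fixed functional and $\zeta_{n} \tow 0$, so $d\J(\eta)[\zeta_{n}] \to 0$ as well. Subtracting the two expressions given by \eqref{eq:derivataJ}, the quadratic cross terms combine \emph{exactly} into $\normh{\zeta_{n}}^{2}$ and $\norml{\zeta_{n}}^{2}\normh{w}^{2}$, and writing $\psi_{n} = a(\eta_{n})w + \eta_{n}$, $\psi = \aeta w + \eta$ we obtain
\begin{equation*}
	\normh{\zeta_{n}}^{2} + \norml{\zeta_{n}}^{2}\normh{w}^{2} + \gamma \int \bigl(\scalar{\nabla F(\psi_{n})}{h^{n}_{\zeta_{n}}} - \scalar{\nabla F(\psi)}{h_{\zeta_{n}}}\bigr) \to 0,
\end{equation*}
where $h^{n}_{\zeta_{n}} = da(\eta_{n})[\zeta_{n}]\,w + \zeta_{n}$ and $h_{\zeta_{n}} = da(\eta)[\zeta_{n}]\,w + \zeta_{n}$.

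It remains to bound the nonlinear integral below by $-o(1)$ minus a small fraction of the two coercive quadratics. Expanding $h^{n}_{\zeta_{n}}, h_{\zeta_{n}}$ and using $\psi_{n} - \psi = (a(\eta_{n}) - \aeta)w + \zeta_{n}$, the integral splits into the diagonal term $\int \scalar{\nabla F(\psi_{n}) - \nabla F(\psi)}{\psi_{n} - \psi}$, which is $\geq 0$ by the monotonicity of $\nabla F$ coming from \eqref{eq:ipotesiFconvex} and thus carries the right sign, plus three remainders carrying the scalar factors $a(\eta_{n}) - \aeta$, $da(\eta_{n})[\zeta_{n}]$ and $da(\eta)[\zeta_{n}]$. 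Since $\lvert a(\eta_{n}) - \aeta\rvert$ and $\lvert da(\eta_{n})[\zeta_{n}]\rvert$ are $O(\norml{\zeta_{n}}^{2}) + o(1)$ (from $a(\eta_{n})^{2} - \aeta^{2} = \norml{\eta}^{2} - \norml{\eta_{n}}^{2}$, the lower bound on $a$, and $\rscalarl{\eta}{\zeta_{n}} \to 0$), while $da(\eta)[\zeta_{n}] = -\aeta^{-1}\rscalarl{\eta}{\zeta_{n}} \to 0$, and since $\int\scalar{\nabla F(\psi_{n})}{w}$, $\int\scalar{\nabla F(\psi)}{w}$ are bounded by the growth estimate \eqref{eq:definiscimu} (or \eqref{eq:stimaFw}), every remainder is either $o(1)$ or proportional to $\norml{\zeta_{n}}^{2}$. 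Invoking the mass inequality $\normh{\cdot}^{2} \geq m\norml{\cdot}^{2}$ and the smallness of $\gamma$ from \eqref{eq:ipotesigamma1}, these are absorbed into $\normh{\zeta_{n}}^{2} + \norml{\zeta_{n}}^{2}\normh{w}^{2}$, forcing $\normh{\zeta_{n}} \to 0$, i.e. $\eta_{n} \to \eta$ strongly in $X$. Finally, strong convergence and the continuity of $\eta \mapsto d\J(\eta)$ give $d\J(\eta)[\xi] = \lim d\J(\eta_{n})[\xi] = 0$ for every $\xi \in X_{-}$, so $\eta$ is a critical point of $\J$. The main obstacle is precisely the term $\int\scalar{\nabla F(\psi_{n})}{\zeta_{n}}$, which weak convergence alone cannot kill; the monotonicity of $\nabla F$ is what restores compactness, and keeping the remainder constants below the thresholds \eqref{eq:ipotesigamma1}--\eqref{eq:ipotesigamma2} is the delicate part of the bookkeeping.
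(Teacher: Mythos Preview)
Your approach is correct and genuinely different from the paper's. After the common first step (boundedness, $\norml{\eta_{n}}^{2}<\lambda/2$, weak limit $\eta$), the paper tests only $d\J(\eta_{n})$ against $\eta_{n}-\eta$, arriving at
\[
(1-\gamma C_{\alpha,\lambda})\norml{\eta_{n}-\eta}^{2}\normh{w}^{2}+\normh{\eta_{n}-\eta}^{2}\leq -\gamma\,\Re\!\int\scalar{\nabla F(\psi_{n})}{\eta_{n}-\eta}+o(1),
\]
and then shows $\limsup_{n}\Re\!\int\scalar{\nabla F(\psi_{n})}{\eta_{n}-\eta}\geq 0$ by a concentration--compactness dichotomy on $\eta_{n}-\eta$: in the vanishing case one uses $\norml[q]{\eta_{n}-\eta}\to 0$ for $q\in(2,3)$, and in the dichotomy case one splits the integral over $B_{R_{n}}$ and its complement, exploiting the sign $\scalar{\nabla F(\eta_{n})}{\eta_{n}}\geq 0$ from \eqref{eq:ipotesiAR} on the far piece. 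You instead subtract $d\J(\eta)[\zeta_{n}]$ (which vanishes by weak convergence) and use the monotonicity of $\nabla F$ from \eqref{eq:ipotesiFconvex} to make the diagonal term $\int\scalar{\nabla F(\psi_{n})-\nabla F(\psi)}{\psi_{n}-\psi}$ nonnegative, reducing everything to cross terms carrying factors $a(\eta_{n})-\aeta$, $da(\eta_{n})[\zeta_{n}]$, $da(\eta)[\zeta_{n}]$ that are $O(\norml{\zeta_{n}}^{2})+o(1)$ and are absorbed via \eqref{eq:stimaFw} and the smallness $\gamma C_{\alpha,\lambda}<\tfrac14$.

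What each buys: your route is shorter and bypasses the Lions machinery entirely, but it leans on the convexity hypothesis \eqref{eq:ipotesiFconvex}; the paper's argument for this lemma uses only \eqref{eq:ipotesiAR} and would survive if \eqref{eq:ipotesiFconvex} were dropped or weakened (as the authors hint in Remark~\ref{rem:ipotesi}). One bookkeeping point to watch in your write-up: estimate \eqref{eq:stimaFw} is stated under $\J(\eta)\geq 0$, which you do not know for the weak limit; you should instead bound $\int\scalar{\nabla F(\psi)}{w}$ directly from \eqref{eq:definiscimu}, using $\norml{\psi}^{2}=\lambda$ and $\normh{\eta}^{2}\leq\liminf\normh{\eta_{n}}^{2}\leq\lambda\normh{w}^{2}$ together with $a(\eta)^{2}\geq\lambda/2$, which reproduces the same form of bound with a harmless constant.
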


\begin{proof}
	Follows form the Lemma \ref{lem:stime_grad} and Remark
	\ref{rem:psbdd} that $\norml{\eta_{n}}^{2} < \frac{\lambda}{2}$
	and that $\normh{\eta_{n}} \leq a(\eta_{n})\normh{w} \leq
	\sqrt{\lambda} \normh{w}$ is bounded, hence $\eta_{n} \tow \eta$
	(up to a subsequence).
	
	From
	\begin{align*}
		o(1) &= d\J(\eta_{n})[\eta_{n} - \eta] = -
		\rscalarl{\eta_{n}}{\eta_{n} - \eta} \normh{w}^{2} -
		\rscalarh{\eta_{n}}{\eta_{n} - \eta} \\
		&\qquad - \gamma \int \scalar{\nabla
		F(\psi_{n})}{da(\eta_{n})[\eta_{n} - \eta]w + \eta_{n} - \eta}
		\\
		&= - \norml{\eta_{n} - \eta}^{2} \normh{w}^{2} -
		\rscalarl{\eta}{\eta_{n} - \eta} \normh{w}^{2} -
		\normh{\eta_{n} - \eta}^{2} - \rscalarh{\eta}{\eta_{n} - \eta}
		\\
		&\qquad - \gamma \int \scalar{\nabla F(\psi_{n})}{ \eta_{n} -
		\eta} - \gamma da(\eta_{n})[\eta_{n} - \eta] \int
		\scalar{\nabla F(\psi_{n})}{w}
	\end{align*}
	we deduce, using also the fact that $\norml{\eta_{n}}^{2} <
	\frac{\lambda}{2}$, that
	\begin{multline*}
		\norml{\eta_{n} - \eta}^{2} \normh{w}^{2} + \normh{\eta_{n} -
		\eta}^{2} \\
		= - \gamma \int \scalar{\nabla F(\psi_{n})}{ \eta_{n} - \eta}
		+ \gamma a(\eta_{n})^{-1}\rscalarl{\eta_{n}}{\eta_{n} - \eta}
		\int \scalar{\nabla F(\psi_{n})}{w} + o(1)\\
		= - \gamma \int \scalar{\nabla F(\psi_{n})}{ \eta_{n} - \eta}
		+ \gamma a(\eta_{n})^{-1}\norml{\eta_{n} - \eta}^{2} \int
		\scalar{\nabla F (\psi_{n})}{w} + o(1).
	\end{multline*}
	From the estimate \eqref{eq:stimaFw} follows
	\begin{equation*}
		(1 - \gamma C_{\alpha, \lambda}) \norml{\eta_{n} - \eta}^{2}
		\normh{w}^{2} + \normh{\eta_{n} - \eta}^{2} \leq - \gamma \int
		\scalar{\nabla F(\psi_{n})}{ \eta_{n} - \eta} + o(1).
	\end{equation*}

	We will prove that $\liminf_{n\to +\infty} \int \scalar{\nabla F
	(\psi_{n})}{ \eta_{n} - \eta} \geq 0$ applying concentration
	compactness to the sequence of functions $\eta_{n} - \eta$.
	
	In case we have vanishing, that is
	\begin{equation*}
		\forall R > 0 \qquad \lim_{n \to +\infty} \sup_{y \in
		\mathbb{R}^{3}} \int_{B_{R}(y)} \abs{\eta_{n} - \eta}^{2} = 0,
	\end{equation*}
	we have that $\norml[q]{\eta_{n} - \eta} \to 0$ as $n \to +\infty$
	for all $q \in (2,3)$ (see \cite{Lions_1984} or \cite[Lemma
	1.21]{Willem_1996}). 
	
	For all $\epsilon > 0$ we have that
	\begin{align*}
		\labs{\int \scalar{\nabla F(\psi_{n})}{\eta_{n} - \eta}} &\leq
		\int \left(\epsilon\abs{\psi_{n}} + \mu_{\epsilon}
		\abs{\psi_{n}}^{\alpha-1} \right) \abs{\eta_{n} - \eta} \\
		&\leq \epsilon \norml{\eta_{n} - \eta} + \mu_{\epsilon}
		\norml[\alpha]{\psi_{n}}^{\alpha-1} \norml[\alpha]{\eta_{n} -
		\eta} \leq \sqrt{2 \lambda} \epsilon + o(1)
	\end{align*}
	so that $\lim_{n\to +\infty} \int \scalar{\nabla F (\psi_{n})}{
	\eta_{n} - \eta} = 0$ if vanishing occurs.

	If we have dichotomy then for all $\epsilon > 0$ there is a
	sequences $R_{n} \to +\infty$ such that
	\begin{equation*}
		\int_{\mathbb{R}^{3} \setminus B_{R_{n}}(0)} \norml{\eta_{n} -
		\eta}^{2} \geq \sigma - \epsilon,
	\end{equation*} 
	where (up to a subsequence)
	\begin{equation*}
		\sigma = \lim_{n \to + \infty} \int_{\mathbb{R}^{3}} \abs{\eta_{n}
		- \eta}^{2}.
	\end{equation*}
	
	We have that
	\begin{align*}
		\int &\scalar{\nabla F(\psi_{n})}{ \eta_{n} - \eta} \\
		&= \int_{B_{R_{n}}(0)} \scalar{\nabla F(\psi_{n})}{
		\eta_{n} - \eta} + \int_{\mathbb{R}^{3} \setminus
		B_{R_{n}}(0)} \scalar{\nabla F(\eta_{n})}{\eta_{n}} \\
		&\qquad + \int_{\mathbb{R}^{3} \setminus B_{R_{n}}(0)}
		\scalar{\nabla F(\psi_{n}) - \nabla F(\eta_{n})}{\eta_{n}} -
		\int_{\mathbb{R}^{3} \setminus B_{R_{n}}(0)} \scalar{\nabla
		F(\psi_{n})}{\eta}\\
		&\geq -c(\sqrt{\epsilon} +
		\epsilon^{\frac{2(3-\alpha)}{\alpha}}) \\
		&\qquad + \int_{\mathbb{R}^{3} \setminus B_{R_{n}}(0)}
		\scalar{\nabla F(\psi_{n}) - \nabla F(\eta_{n})}{\eta_{n}} -
		\int_{\mathbb{R}^{3} \setminus B_{R_{n}}(0)} \scalar{\nabla
		F(\psi_{n})}{\eta}
	\end{align*}
	where we have used the fact that $\scalar{\nabla
	F(\eta_{n})}{\eta_{n}} \geq 0$ and
	\begin{align*}
		&\labs{\int_{B_{R_{n}}(0)} \scalar{\nabla F(\psi_{n})}{
		\eta_{n} - \eta}} \\
		&\quad \leq \norml{\psi_{n}} \left(\int_{B_{R_{n}}(0)}
		\abs{\eta_{n} - \eta}^{2}\right)^{\frac{1}{2}} + \mu
		\norml[\alpha]{\psi_{n}}^{\alpha-1} \left(\int_{B_{R_{n}}(0)}
		\abs{\eta_{n} - \eta}^{\alpha}\right)^{\frac{1}{\alpha}} \\
		&\quad \leq \left(\int_{B_{R_{n}}(0)} \abs{\eta_{n} -
		\eta}^{2}\right)^{\frac{1}{2}} + \mu
		\norml[\alpha]{\psi_{n}}^{\alpha-1}
		\norml[3]{\eta_{n}-\eta}^{\frac{3(\alpha-2)}{\alpha}} \left(
		\int_{B_{R_{n}}(0)} \abs{\eta_{n} - \eta}^{2}
		\right)^{\frac{2(3-\alpha)}{\alpha}} \\
		&\quad \leq c(\sqrt{\epsilon} +
		\epsilon^{\frac{2(3-\alpha)}{\alpha}})
	\end{align*}
	for some constant $c > 0$.
	
	We now observe that
	\begin{align*}
		\int_{\mathbb{R}^{3} \setminus B_{R_{n}}(0)} &\scalar {\nabla
		F(\psi_{n}) - \nabla F(\eta_{n})}{\eta_{n}} \leq
		\int_{\mathbb{R}^{3} \setminus B_{R_{n}}(0)} \abs{\nabla
		F(\psi_{n}) - \nabla F(\eta_{n})}\abs {\eta_{n}} \\
		&\leq \int_{\mathbb{R}^{3} \setminus B_{R_{n}}(0)}
		a(\eta_{n}) \abs{w} \abs {\eta_{n}} \\
		&\qquad + \mu \int_{\mathbb{R}^{3} \setminus B_{R_{n}}(0)}
		\left(\abs{\eta_{n}}^{\alpha-2} + a(\eta_{n})^{\alpha-2}
		\abs{w}^{\alpha-2} \right) a(\eta_{n}) \abs{w} \abs {\eta_{n}}
		\\
		& \leq \sqrt{\lambda} \norml{\eta_{n}}
		\left(\int_{\mathbb{R}^{3} \setminus B_{R_{n}(0)}} \abs{w}^{2}
		\right)^{\frac{1}{2}} + \mu\sqrt{\lambda}
		\norml[\alpha]{\eta_{n}}^{\alpha-1} \left(\int_{\mathbb{R}^{3}
		\setminus B_{R_{n}}(0)} \abs{w}^{\alpha}\right)^{1/\alpha} \\
		& \qquad + \mu \lambda^{\frac{\alpha-1}{2}}
		\norml[\alpha]{\eta_{n}} \left(\int_{\mathbb{R}^{3} \setminus
		B_{R_{n}}(0)} \abs{w}^{\alpha}\right)^{(\alpha-1)/\alpha} =
		o(1)
	\end{align*}
	where we have used \eqref{eq:stimaDiffgrad}.
	
	A similar argument shows that 
	\begin{equation*}
		\int_{\mathbb{R}^{3} \setminus B_{R_{n}}(0)}
		\scalar{\nabla F(\psi_{n})}{\eta} = o(1),
	\end{equation*}
	and also when dichotomy occurs we find that 
	\begin{equation*}
		(1 - \gamma C_{\alpha,\lambda}) \norml{\eta_{n} - \eta}^{2}
		\normh{w}^{2} + \normh{\eta_{n} - \eta}^{2} \leq c\epsilon +
		o(1).
	\end{equation*}
	
	So in every case we have that and $\eta_{n} \to \eta$, with $\eta$
	critical point of $\J$.
\end{proof}

We now show that the functional $\J$ is concave in the set where it is
positive in the $L^{2}$-ball of radius $\sqrt{\frac{\lambda}{2}}$.
\begin{lem}
	\label{lem:criticomax}
	Let $\eta \in B_{\lambda}$ be such that $\norml{\eta}^{2} < 
	\frac{\lambda}{2}$ and $\J(\eta) \geq 0$. Then
	\begin{equation*}
		d^{2}\J(\eta)[\xi, \xi] \leq - \frac{3}{4} \normh{\xi}^{2}
		\qquad \text{ for all } \xi \in X_{-}.
	\end{equation*}
\end{lem}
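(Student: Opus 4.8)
The plan is to write $\J=I\circ\Phi$ with $\Phi(\eta)=\aeta w+\eta=\psi$ and apply the chain rule for the second derivative, then exploit the orthogonal splitting $X=X_{+}\oplus X_{-}$, the convexity hypothesis \eqref{eq:ipotesiFconvex} and the a priori bound \eqref{eq:stimaFw}. First I would record the derivatives of $\Phi$. Differentiating $\daeta{\xi}=-\aeta^{-1}\rscalarl{\eta}{\xi}$ once more gives
\[
	d^{2}a(\eta)[\xi,\xi]=-\aeta^{-3}\bigl(\rscalarl{\eta}{\xi}\bigr)^{2}-\aeta^{-1}\norml{\xi}^{2}\le 0 ,
\]
so that $d\Phi(\eta)[\xi]=h_{\xi}=\daeta{\xi}w+\xi$ and $d^{2}\Phi(\eta)[\xi,\xi]=d^{2}a(\eta)[\xi,\xi]\,w$. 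The chain rule then yields $d^{2}\J(\eta)[\xi,\xi]=d^{2}I(\psi)[h_{\xi},h_{\xi}]+d^{2}a(\eta)[\xi,\xi]\,dI(\psi)[w]$.

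Next I would compute the two pieces. Since $w\in X_{+}$ and $\xi\in X_{-}$ we have $\Lambda_{+}h_{\xi}=\daeta{\xi}w$, $\Lambda_{-}h_{\xi}=\xi$, and $\Lambda_{+}\psi=\aeta w$, whence
\[
	d^{2}I(\psi)[h_{\xi},h_{\xi}]=\bigl(\daeta{\xi}\bigr)^{2}\normh{w}^{2}-\normh{\xi}^{2}-\gamma\int\scalar{D^{2}F(\psi)h_{\xi}}{h_{\xi}},\qquad dI(\psi)[w]=\aeta\normh{w}^{2}-\gamma\int\scalar{\nabla F(\psi)}{w}.
\]
The crucial algebraic observation is that the two terms quadratic in $\rscalarl{\eta}{\xi}$ cancel: $\bigl(\daeta{\xi}\bigr)^{2}\normh{w}^{2}=\aeta^{-2}\bigl(\rscalarl{\eta}{\xi}\bigr)^{2}\normh{w}^{2}$, while the $\aeta\normh{w}^{2}$ part of $d^{2}a(\eta)[\xi,\xi]\,dI(\psi)[w]$ contributes $\bigl(-\aeta^{-2}\bigl(\rscalarl{\eta}{\xi}\bigr)^{2}-\norml{\xi}^{2}\bigr)\normh{w}^{2}$. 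After cancellation I am left with
\[
	d^{2}\J(\eta)[\xi,\xi]=-\norml{\xi}^{2}\normh{w}^{2}-\normh{\xi}^{2}-\gamma\int\scalar{D^{2}F(\psi)h_{\xi}}{h_{\xi}}-\gamma\,d^{2}a(\eta)[\xi,\xi]\int\scalar{\nabla F(\psi)}{w}.
\]
The first two terms are exactly of the desired form, and by \eqref{eq:ipotesiFconvex} the $D^{2}F$ term is nonnegative, so $-\gamma\int\scalar{D^{2}F(\psi)h_{\xi}}{h_{\xi}}\le 0$ and may be discarded.

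The only term of indefinite sign is the last one, and estimating it is the main obstacle, since $\int\scalar{\nabla F(\psi)}{w}$ has no definite sign. Here I would invoke \eqref{eq:stimaFw} of Lemma \ref{lem:stime_grad}, available because $\J(\eta)\ge 0$, to get $\labs{\int\scalar{\nabla F(\psi)}{w}}\le \aeta\,C_{\alpha,\lambda}\normh{w}^{2}$, so it remains to control $\aeta\,\labs{d^{2}a(\eta)[\xi,\xi]}=\aeta^{-2}\bigl(\rscalarl{\eta}{\xi}\bigr)^{2}+\norml{\xi}^{2}$. This is the point where the restriction $\norml{\eta}^{2}<\frac{\lambda}{2}$ is decisive: by Cauchy--Schwarz $\bigl(\rscalarl{\eta}{\xi}\bigr)^{2}\le\norml{\eta}^{2}\norml{\xi}^{2}$, and since $\aeta^{2}=\lambda-\norml{\eta}^{2}>\frac{\lambda}{2}>\norml{\eta}^{2}$ we obtain $\aeta^{-2}\bigl(\rscalarl{\eta}{\xi}\bigr)^{2}<\norml{\xi}^{2}$, hence $\aeta\,\labs{d^{2}a(\eta)[\xi,\xi]}<2\norml{\xi}^{2}$. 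Combining these bounds, the last term is at most $2\gamma C_{\alpha,\lambda}\norml{\xi}^{2}\normh{w}^{2}$ in absolute value, so
\[
	d^{2}\J(\eta)[\xi,\xi]\le\bigl(2\gamma C_{\alpha,\lambda}-1\bigr)\norml{\xi}^{2}\normh{w}^{2}-\normh{\xi}^{2}.
\]
Finally, \eqref{eq:ipotesigamma1} guarantees $\gamma C_{\alpha,\lambda}<\frac14$, so $2\gamma C_{\alpha,\lambda}-1<0$ and the first term is $\le 0$, giving $d^{2}\J(\eta)[\xi,\xi]\le-\normh{\xi}^{2}$. Without the $L^{2}$-ball restriction $\norml{\eta}^{2}<\frac{\lambda}{2}$ the factor $\aeta^{-2}\bigl(\rscalarl{\eta}{\xi}\bigr)^{2}$ could not be absorbed into $\norml{\xi}^{2}$, and the indefinite gradient term would escape control; that absorption, together with the smallness of $\gamma$, is the heart of the argument.
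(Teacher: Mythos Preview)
Your proof is correct and follows essentially the same route as the paper's own argument: both compute $d^{2}\J(\eta)[\xi,\xi]$, obtain the expression $-\norml{\xi}^{2}\normh{w}^{2}-\normh{\xi}^{2}-\gamma\int\scalar{D^{2}F(\psi)h_{\xi}}{h_{\xi}}-\gamma\,d^{2}a(\eta)[\xi,\xi]\int\scalar{\nabla F(\psi)}{w}$, discard the Hessian term via \eqref{eq:ipotesiFconvex}, and control the remaining indefinite term using \eqref{eq:stimaFw} together with the bound $\aeta^{-2}(\rscalarl{\eta}{\xi})^{2}+\norml{\xi}^{2}\le 2\norml{\xi}^{2}$ coming from Cauchy--Schwarz and $\norml{\eta}^{2}<\frac{\lambda}{2}$, before concluding with $2\gamma C_{\alpha,\lambda}<\frac{1}{2}$ from \eqref{eq:ipotesigamma1}. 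Your presentation via the chain rule $d^{2}\J=d^{2}I[h_{\xi},h_{\xi}]+d^{2}a\,dI[w]$ and the explicit cancellation of the $(\rscalarl{\eta}{\xi})^{2}$ terms is slightly more detailed than the paper's direct computation, but the substance is identical.
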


\begin{proof}
	In order to compute the second derivative we denote $\psi = \aeta
	w + \eta$ and $h_{\xi} = \daeta{\xi}w + \xi$ and observe that
	\begin{equation*}
		d^{2}\aeta[\xi,\zeta] = -\aeta^{-1} \left( \Re
		\scalarl{\xi}{\zeta} + \frac{\Re \scalarl{\eta}{\xi} \Re
		\scalar{\eta}{\zeta}}{\lambda - \norml{\eta}^{2}} \right).
	\end{equation*}
	We have
	\begin{align*}
		d^{2}\J(\eta)[\xi, \xi] &= - \norml{\xi}^{2}\normh{w}^{2} -
		\normh{\xi}^{2} - \gamma \int \scalar{D^2
		F(\psi)h_{\xi}}{h_{\xi}} \\
		&\qquad - \gamma \int \scalar{\nabla
		F(\psi)}{d^{2}a(\eta)[\xi,\xi]w}\\
		&= - \norml{\xi}^{2}\normh{w}^{2} - \normh{\xi}^{2} - \gamma
		\int \scalar{D^2 F(\psi)h_{\xi}}{h_{\xi}} \\
		&\qquad + \frac{\gamma}{\aeta} \left( \norml{\xi}^{2} +
		\frac{\scalarl{\eta}{\xi}^{2}}{\lambda - \norml{\eta}^{2}}
		\right) \int \scalar{\nabla F(\psi)}{w} \\
	\end{align*}
	Then, using \eqref{eq:stimaFw} and \eqref{eq:stimeF_due}:
	\begin{align*}
		d^{2}\J(\eta)[\xi, \xi] &\leq - \norml{\xi}^{2}\normh{w}^{2} -
		\normh{\xi}^{2} + \gamma \int |h_{\xi}|^2 + \gamma \mu \int
		|\psi|^{\alpha-2} |h_{\xi}|^2 \\
		&\qquad + \frac{\lambda \norml{\xi}^{2} - \norml{\xi}^{2}
		\norml{\eta}^{2} + \scalarl{\eta}{\xi}^{2}}{\lambda -
		\norml{\eta}^{2}} \gamma C_{\alpha,\lambda} \normh{w}^{2} \\
		&\leq - (1- \gamma S_2^2)\norml{\xi}^{2}\normh{w}^{2} -(1-
		\gamma S_2^2 ) \normh{\xi}^{2} \\
		&\qquad + \gamma \mu |\psi|_2^{\alpha-2} | h_{\xi}
		|^2_{\frac{4}{4-\alpha}}+ 2 \norml{\xi}^{2} \gamma
		C_{\alpha,\lambda} \normh{w}^2 \\
		&\leq - \left(1 - \gamma (2C_{\alpha,\lambda} +S_2^2 + \mu
		\lambda^{\frac{\alpha-2}{2}} S_{\frac{4}{4-\alpha}}^2)
		\right)\norml{\xi}^{2} \normh{w}^{2} \\
		& \qquad - \left(1 - \gamma ( S_2^2 +
		\mu\lambda^{\frac{\alpha-2}{2}} S_{\frac{4}{4-\alpha}}^2)
		\right)\normh{\xi}^{2} \\
		&\leq - \frac{1}{4}\norml{\xi}^{2} \normh{w}^{2} - \frac{3}{4}
		\normh{\xi}^{2} \leq - \frac{3}{4}\normh{\xi}^{2}.
	\end{align*}
	since $2 \gamma C_{\alpha,\lambda} < \frac{1}{2}$ by 
	\eqref{eq:ipotesigamma1} and by \eqref{eq:ipotesigamma2}.
\end{proof}

\begin{lem}
	\label{lem:stimautile}
	Let $w \in \Sigma_{+}$, $\eta \in B_{\lambda}$ and $\psi
	= a(\eta)w + \eta \in \Sigma^{\lambda}$. Then we have that
	\begin{multline}
		\int F(\psi) \geq \int F(\sqrt{\lambda}w) + \int
		\scalar{\nabla F(aw)}{\eta} - \left(S_{2}^{2} + \mu
		\lambda^{\frac{\alpha-2}{2}} S^{2}_{\frac{4}{4-\alpha}}\right)
		\norml{\eta}^{2}\normh{w}^{2} \\
		- \left(S_{2}^{2} + \mu \lambda^{\frac{\alpha-2}{2}}
		S^{\alpha}_{\frac{4}{4-\alpha}} \right) \normh{\eta}^{2} - \mu
		\lambda^{\frac{\alpha - 2}{2}} \int \abs{w}^{\alpha-2}
		\abs{\eta}^{2}
	\end{multline}
\end{lem}

\begin{proof}
	The result follows, using \eqref{eq:stimaDiffgrad} from 
	\begin{align*}
		\int F(\psi) &- \int F(a(\eta)w)) - \int \scalar{\nabla
		F(a(\eta)w)}{\eta} = \\
		&=\int \scalar{\nabla F(a(\eta)w + \theta\eta) - \nabla
		F(a(\eta)w)}{\eta} \\
		&\geq - \int (1 + \mu \abs{a(\eta)w}^{\alpha-2} + \mu 
		\abs{\eta}^{\alpha-2})\abs{\eta}^{2} \\
		&\geq - \int \abs{\eta}^{2} - \mu \lambda^{\frac{\alpha-2}{2}}
		\int \abs{w}^{\alpha-2} \abs{\eta}^{2} - \mu \int
		\abs{\eta}^{\alpha} \\
		&\geq - \norml{\eta}^{2} - \mu \lambda^{\frac{\alpha-2}{2}}
		\int \abs{w}^{\alpha-2} \abs{\eta}^{2} - \mu
		\norml{\eta}^{\alpha-2} \norml[\frac{4}{4-\alpha}]{\eta}^{2} 
		\\
		&\geq - (S_{2}^{2} + \mu \lambda^{\frac{\alpha - 2}{2}}
		S^{2}_{\frac{4}{4-\alpha}}) \normh{\eta}^{2} - \mu
		\lambda^{\frac{\alpha-2}{2}} \int \abs{w}^{\alpha-2}
		\abs{\eta}^{2}
	\end{align*}
	for some $\theta(x) \in [0,1]$ and 
	\begin{align*}
		\int &(F(a(\eta)w) - F(\sqrt{\lambda} w)) = \int
		\scalar{\nabla F(a(\eta) w + \theta(a(\eta) - \sqrt{\lambda})
		w)}{(a(\eta) - \sqrt{\lambda}) w} \\
		&\geq - \int \abs{a(\eta) + \theta(a(\eta) - \sqrt{\lambda})}
		\abs{w} \abs{a(\eta) - \sqrt{\lambda}} \abs{w} \\
		&\qquad - \mu\int \abs{a(\eta) + \theta(a(\eta) -
		\sqrt{\lambda})}^{\alpha-1} \abs{w}^{\alpha-1} \abs{a(\eta) -
		\sqrt{\lambda}} \abs{w} \\
		&\geq - \sqrt{\lambda} \abs{\sqrt{\lambda} - a(\eta)} \int
		\abs{w}^{2} - \mu \lambda^{\frac{\alpha-1}{2}}
		\abs{\sqrt{\lambda} - a(\eta)} \int \abs{w}^{\alpha}\\
		&\geq - (\sqrt{\lambda} - a(\eta))
		\left(\sqrt{\lambda}\norml{w}^{2} + \mu
		\lambda^{\frac{\alpha-1}{2}} \int \abs{w}^{\alpha} \right) \\
		&\geq - \frac{\lambda - a(\eta)^{2}}{\sqrt{\lambda} + a(\eta)}
		\left(\sqrt{\lambda}\norml{w}^{2} + \mu
		\lambda^{\frac{\alpha-1}{2}} \norml{w}^{(\alpha-2)}
		\norml[4/(4-\alpha)]{w}^{2} \right) \\
		&\geq - \left(S_{2}^{2} + \mu \lambda^{\frac{\alpha-2}{2}}
		S^{2}_{\frac{4}{4-\alpha}}\right)
		\norml{\eta}^{2}\normh{w}^{2}
	\end{align*}
\end{proof}

We let, for all $w \in \Sigma_{+}$
\begin{equation*}
	\E(w) = \sup_{\eta \in B_{\lambda}} \J(\eta).
\end{equation*}

\begin{lem}
	\label{lem:stime_max}
	For all $w \in \Sigma_{+}$ we have
	\begin{multline*}
		0 < \frac{\lambda}{2 S_{2}^{2}} \left(1 - \gamma
		\left(S_{2}^{2} + 2\frac{\mu}{\alpha}
		\lambda^{\frac{\alpha-2}{2}}
		S^{2}_{\frac{4}{4-\alpha}}\right)\right) \\
		\leq \frac{\lambda}{2} \left(1 - \gamma \left(S_{2}^{2} +
		2\frac{\mu}{\alpha} \lambda^{\frac{\alpha-2}{2}}
		S^{2}_{\frac{4}{4-\alpha}}\right)\right) \normh{w}^{2} \leq
		\E(w) \leq \frac{\lambda}{2} \normh{w}^{2}.
	\end{multline*}
\end{lem}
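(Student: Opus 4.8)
The plan is to establish the four-term chain by treating the two outer inequalities separately and then linking them. The rightmost bound $\E(w)\le\frac\lambda2\normh{w}^2$ is the easy direction: for every $\eta\in B_\lambda$ one has $\aeta^2=\lambda-\norml{\eta}^2\le\lambda$, while $\normh{\eta}^2\ge0$ and, since $F\ge0$ by \eqref{eq:ipotesiAR}, also $\gamma\int F(\psi)\ge0$; hence
\begin{equation*}
	\J(\eta)=\tfrac12\aeta^2\normh{w}^2-\tfrac12\normh{\eta}^2-\gamma\int F(\psi)\le\tfrac12\lambda\normh{w}^2,
\end{equation*}
and taking the supremum over $\eta\in B_\lambda$ gives the claim.

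For the lower bound on $\E(w)$ I would simply evaluate $\J$ at $\eta=0$, so that $\aeta=\sqrt\lambda$ and $\psi=\sqrt\lambda\,w$, giving $\E(w)\ge\J(0)=\frac12\lambda\normh{w}^2-\gamma\int F(\sqrt\lambda\,w)$. To match the stated constant I would bound $\int F(\sqrt\lambda\,w)$ from above using \eqref{eq:stimeF}: since $\norml{w}^2=1$,
\begin{equation*}
	\int F(\sqrt\lambda\,w)\le\tfrac12\lambda\norml{w}^2+\tfrac{\mu}{\alpha}\lambda^{\alpha/2}\int\abs{w}^\alpha.
\end{equation*}
The key computation is to control $\int\abs{w}^\alpha$. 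Writing $\abs{w}^\alpha=\abs{w}^{\alpha-2}\abs{w}^2$ and applying H\"older with the conjugate exponents $\frac{2}{\alpha-2}$ and $\frac{2}{4-\alpha}$ yields $\int\abs{w}^\alpha\le\norml{w}^{\alpha-2}\norml[4/(4-\alpha)]{w}^2=\norml[4/(4-\alpha)]{w}^2$; here one checks that $\frac{4}{4-\alpha}\in(2,3]$ precisely for $\alpha\in(2,\frac83]$, so that the Sobolev embedding associated with \eqref{eq:sobolevnorm} applies and $\norml[4/(4-\alpha)]{w}^2\le S^2_{\frac{4}{4-\alpha}}\normh{w}^2$. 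Bounding the first term by $\norml{w}^2=1\le S_2^2\normh{w}^2$, one obtains $\int F(\sqrt\lambda\,w)\le\bigl(\frac12\lambda S_2^2+\frac{\mu}{\alpha}\lambda^{\alpha/2}S^2_{\frac{4}{4-\alpha}}\bigr)\normh{w}^2$; substituting and factoring out $\frac\lambda2\normh{w}^2$ (noting $\frac2\lambda\lambda^{\alpha/2}=2\lambda^{\frac{\alpha-2}{2}}$) produces exactly the middle lower bound.

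Finally, the two leftmost inequalities are elementary consequences of the hypotheses. The passage from $\frac\lambda2(1-\gamma(\cdots))\normh{w}^2$ to $\frac{\lambda}{2S_2^2}(1-\gamma(\cdots))$ uses $\normh{w}^2\ge\frac1{S_2^2}$ (again from $\norml{w}^2=1$) together with the positivity of the factor $1-\gamma(\cdots)$; and that factor is positive because $\gamma\le\gamma_0$, $\frac2\alpha<1$, and \eqref{eq:ipotesigamma2} force $\gamma\bigl(S_2^2+2\frac\mu\alpha\lambda^{\frac{\alpha-2}{2}}S^2_{\frac{4}{4-\alpha}}\bigr)<\gamma_0\bigl(S_2^2+\mu\lambda^{\frac{\alpha-2}{2}}S^2_{\frac{4}{4-\alpha}}\bigr)<\frac14<1$. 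I expect the only genuinely delicate point to be the H\"older interpolation together with the verification that the exponent $\frac4{4-\alpha}$ stays in the admissible Sobolev range $[2,3]$ throughout $\alpha\in(2,\frac83]$; everything else is bookkeeping.
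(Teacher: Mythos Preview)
Your proof is correct and follows essentially the same route as the paper: the upper bound comes from $F\ge0$ and $\aeta^2\le\lambda$, the lower bound from evaluating $\J$ at $\eta=0$, applying \eqref{eq:stimeF}, and controlling $\int\abs{w}^\alpha$ via the same H\"older interpolation $\int\abs{w}^\alpha\le\norml{w}^{\alpha-2}\norml[4/(4-\alpha)]{w}^2$ followed by Sobolev. Your additional verification that $\tfrac{4}{4-\alpha}\in(2,3]$ for $\alpha\in(2,\tfrac83]$ and the explicit use of $\tfrac{2}{\alpha}<1$ for positivity are useful details the paper leaves implicit.
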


\begin{proof}
	We have, using \eqref{eq:stimeF}, that 
	\begin{align*}
		\E(w) &\geq \J(0) = \frac{1}{2} \normh{\sqrt{\lambda}w}^{2} -
		\gamma \int F(\sqrt{\lambda} w) \\
		&\geq \frac{\lambda}{2} \normh{w}^{2} - \gamma
		\frac{\lambda}{2} \norml{w}^{2} - \gamma \frac{\mu}{\alpha}
		\lambda^{\frac{\alpha}{2}} \norml{w}^{\alpha-2}
		\norml[\frac{4}{4-\alpha}]{w}^{2} \\
		& \geq \frac{\lambda}{2} \normh{w}^{2} - \gamma \lambda
		\left(\frac{S_{2}^{2}}{2} + \frac{\mu}{\alpha}
		\lambda^{\frac{\alpha-2}{2}} S^{2}_{\frac{4}{4-\alpha}}\right)
		\normh{w}^{2} \\
		& \geq \frac{\lambda}{2 S_{2}^{2}} \left(1 - \gamma
		\left(S_{2}^{2} + 2\frac{\mu}{\alpha}
		\lambda^{\frac{\alpha-2}{2}}
		S^{2}_{\frac{4}{4-\alpha}}\right)\right)
	\end{align*}
	which is positive by \eqref{eq:ipotesigamma2} and
	\begin{equation*}
		\J(\eta) = \frac{1}{2} \normh{\aeta w}^{2} - \frac{1}{2}
		\normh{\eta}^{2} - \gamma \int F(\psi) \leq \frac{\lambda}{2}
		\normh{w}^{2}.
	\end{equation*}
\end{proof}

The following proposition is analogous to \cite[Proposition
4.5]{CotiZelati_Nolasco_2019}, \cite[Proposition 3.6]{Nolasco_2021}
and \cite[Proposition 2.9]{CotiZelati_Nolasco_2023}. Since in the
present situation the functional $\J$ is, by lemma
\ref{lem:criticomax}, concave in the region where $\J(\eta) \geq 0$
and $\norml{\eta}^{2} < \frac{\lambda}{2}$ the proof is actually
simpler.
\begin{prop}
	\label{prop:massimo}
	For every $w \in \Sigma_{+}$ and $\lambda > 0$ there is a unique
	$\eta(w) \in B_{\lambda}$ such that
	\begin{equation*}
		\J(\eta(w)) = \max_{\eta \in B_{\lambda}} \J(\eta) = \E(w).
	\end{equation*}
	$\eta(w)$ is a critical point of $\J$ on $B_{\lambda}$ such that
	$\norml{\eta(w)}^{2} < \frac{\lambda}{2}$ and
	\begin{equation}
		\label{eq:stime_maxH}
		\normh{\eta(w)}^{2} \leq \lambda \gamma \left(S_{2}^{2} +
		2\frac{\mu}{\alpha} \lambda^{\frac{\alpha-2}{2}}
		S^{2}_{\frac{4}{4-\alpha}}\right) \normh{w}^{2}.
	\end{equation}
	
	Moreover the map 
	\begin{equation*}
		w \in X_{+} \setminus \{0\} \mapsto \gamma(w) =
		\eta(\norml{w}^{-1}w) \in B_{\lambda}
	\end{equation*} 
	is smooth.
\end{prop}
	
\begin{proof}
	We can find, by lemma \ref{lem:stime_max} and using Ekeland's
	variational principle a maximizing Palais-Smale sequence
	$\eta_{n}$ at positive level.
	
	Then, by lemma \ref{lem:PSvale}, $\eta_{n} \to \eta$ (up to a 
	subsequence), with 
	\begin{equation*}
		d\J(\eta) = 0, \qquad \J(\eta) = \E(w) > 0.
	\end{equation*}
	Using Lemma \ref{lem:stime_max} we deduce that
	\begin{equation*}
		\frac{\lambda}{2} \left(1 - \gamma \left(S_{2}^{2} +
		2\frac{\mu}{\alpha} \lambda^{\frac{\alpha-2}{2}}
		S^{2}_{\frac{4}{4-\alpha}}\right)\right) \normh{w}^{2} \leq
		\E(w) = \J(\eta) \leq \frac{\lambda}{2} \normh{w}^{2} -
		\frac{1}{2} \normh{\eta}^{2}
	\end{equation*}
	from which \eqref{eq:stime_maxH} follows.
	
	As a consequence we have that for all $w \in \Sigma_{+}$ the
	convex set
	\begin{equation*}
		\mathcal{B} = \settc{\eta \in B_{\lambda}}{\norml{\eta}^{2} <
		\frac{1}{2} \text{ and } \normh{\eta}^{2} \leq 2\lambda \gamma
		\left(S_{2}^{2} + 2\frac{\mu}{\alpha}
		\lambda^{\frac{\alpha-2}{2}} S^{2}_{\frac{4}{4-\alpha}}\right)
		\normh{w}^{2}}
	\end{equation*}
	contains all the maxima of $\J$ in $B_{\lambda}$. We now show 
	that $\J(\eta) \geq 0$ for all $\eta \in \mathcal{B}$. 
	
	We have that
	\begin{align*}
		\int F(a(\eta)w + \eta) &\leq \frac{1}{2}\norml{a(\eta)w + \eta}^{2} +
		\frac{\mu}{\alpha} \norml[\alpha]{a(\eta)w + \eta}^{\alpha} \\
		&\leq \frac{\lambda}{2} + \frac{\mu}{\alpha}
		{\lambda}^{\frac{\alpha-2}{2}} \norml[\frac{4}{4-\alpha}]{a(\eta)w
		+ \eta}^{2} \\
		&\leq \frac{\lambda}{2}S_{2}^{2}\normh{w}^{2} +
		\frac{\mu}{\alpha} {\lambda}^{\frac{\alpha-2}{2}}
		S_{\frac{4}{4-\alpha}}^{2} \normh{a(\eta)w + \eta}^{2} \\
		&\leq \left(\frac{\lambda}{2}S_{2}^{2} + \frac{\mu}{\alpha}
		{\lambda}^{\frac{\alpha}{2}} S_{\frac{4}{4-\alpha}}^{2} +
		\frac{1}{2} \frac{\mu}{\alpha} {\lambda}^{\frac{\alpha}{2}}
		S_{\frac{4}{4-\alpha}}^{2}\right)\normh{w}^{2} \\
		&\leq \lambda \left(\frac{1}{2}S_{2}^{2} + \mu
		S_{\frac{4}{4-\alpha}}^{2} \right)\normh{w}^{2}
	\end{align*}
	
	We then have
	\begin{align*}
		\J(\eta) &= \frac{1}{2} a(\eta)^{2} \normh{w}^{2} - 
		\frac{1}{2} \normh{\eta}^{2} - \gamma \int F(a(\eta)w + \eta) \\
		&\geq \frac{\lambda}{4} \normh{w}^{2} - \lambda \gamma
		\left(S_{2}^{2} + 2\frac{\mu}{\alpha}
		\lambda^{\frac{\alpha-2}{2}} S^{2}_{\frac{4}{4-\alpha}}\right)
		\normh{w}^{2} - \gamma \lambda \left(\frac{1}{2}S_{2}^{2} +
		\mu S_{\frac{4}{4-\alpha}}^{2} \right)\normh{w}^{2} \\
		&> \frac{\lambda}{4} \normh{w}^{2} - \frac{\lambda}{8}
		\normh{w}^{2} - \frac{\lambda}{8} \normh{w}^{2} = 0
	\end{align*}

	The strict concavity of $\J$ in the convex region $\mathcal{B}$
	implies that such a maximum is unique.
	
	The properties of the map $w \mapsto \gamma(w) =
	\eta(\norml{w}^{-1}w)$ follows exactly as in \cite[Proposition
	2.9]{CotiZelati_Nolasco_2023}.
\end{proof}

Exactly as in \cite{CotiZelati_Nolasco_2023} we can consider the
smooth functional $\E \colon X_{+} \setminus \{0\} \to \mathbb{R}$
defined as
\begin{equation*}
	\E(w) = \J[P(w)](\gamma(w)) = \sup_{\eta \in B_{\lambda}} 
	\J[P(w)](\eta),
\end{equation*}
where, as in the proof of \ref{prop:massimo}, $P(w) =
\frac{w}{\norml{w}}$ and $\gamma(w) = \eta(Pw)$ and deduce that for
all $w \in \Sigma_{+}$ and $v \in X_{+}$.
\begin{equation}
	\label{eq:derivataE}
	d\E(w)[v] = a(\gamma(w))dI(\psi_{w})[v] - a(\gamma(w))^{2}
	\omega(\psi_{w}) \scalarl{w}{v}
\end{equation}
where
\begin{equation}
	\label{eq:moltiplicatore}
	\omega(\psi_{w}) = a(\gamma(w))^{-1} dI(\psi_{w})[w].
\end{equation}
We also deduce that, for all $h = v + \xi \in X$, $v \in X_{+}$, $\xi
\in X_{-}$ and with $\psi$, $a$ and $\omega$ as above we have
\begin{align*}
	dI(\psi)[h] - \omega \scalarl{\psi}{h} &= dI(\psi)[v] +
	dI(\psi)[\xi] -\omega \scalarl{a(\eta) w + \eta}{v + \xi} \\
	&= dI(\psi)[v] + \left(-da(\eta)[\xi] - \scalarl{w}{v} -
	\frac{\scalarl{\eta}{\xi}}{a(\eta)} \right) dI(\psi)[w] \\
	&= dI(\psi)[v - \scalarl{w}{v}w] = \frac{1}{a(\eta)} d\E(w)[v]
\end{align*}
which shows that $w$ is a critical point of $\E$ if and only if $\psi 
= a(\eta(w))w + \eta(w)$ is a critical point for $I$ under the 
constraint $\norml{\psi}^{2} = \lambda$.

The following is essentially Proposition 2.13 of
\cite{CotiZelati_Nolasco_2023}:
\begin{prop}
	\label{prop:punticriticiE}
	Let $w_{0} \in \Sigma_{+}$ be a critical point of $\E$
	restricted on the manifold $\Sigma_{+}$.
	Then $w_{0}$ is a critical point for $\E$ on $X_{+}$ and 
	the function
	\begin{equation*}
		\psi_{0} = a(\eta(w_{0})) w_{0} + \eta(w_{0}) \in \Sigma^{\lambda}
	\end{equation*}
	is a critical point for $I$ on the manifold $\Sigma$ and satisfies
	\begin{equation}
		\label{eq:equazionemoltiplicatore}
		dI(\psi_{0})[h] = \omega \scalarl{\psi_{0}}{h}
		\qquad \text{for all } h \in X
	\end{equation}
	where $\omega = \omega(\psi_{0}) \in \mathbb{R}$,
	\begin{equation*}
		\label{eq:stimeMoltiplicatore}
		(1 - \gamma C_{\alpha,\lambda}) \normh{w_{0}}^{2} \leq
		\omega(\psi_{0}) \leq 2 I(\psi_{w_{0}}) = 2\E(w_{0}).
	\end{equation*}
	
	Moreover, if $\psi_{0} \in \Sigma^{\lambda}$ satisfies
	\eqref{eq:equazionemoltiplicatore} for some $\omega > 0$, then $w
	= \norml{\Lambda_{+}\psi_{0}}^{-1} \Lambda_{+} \psi_{0}$ is a
	critical point for $\E(w)$ .
\end{prop}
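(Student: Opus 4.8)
The plan rests on two facts established just before the statement: that $\E$ is $0$-homogeneous, since $\E(w)=\J[P(w)](\gamma(w))$ depends on $w$ only through $P(w)=w/\norml{w}$, and on the identity
\begin{equation*}
	dI(\psi)[h] - \omega\scalarl{\psi}{h} = \frac{1}{a(\eta(w))}\,d\E(w)[v],
\end{equation*}
valid for $h=v+\xi$ with $v\in X_{+}$, $\xi\in X_{-}$, $\psi=a(\eta(w))w+\eta(w)$ and $\omega=\omega(\psi)$ as in \eqref{eq:moltiplicatore}. First I would promote the criticality of $w_{0}$ from $\Sigma_{+}$ to $X_{+}$. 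Differentiating $\E(tw_{0})=\E(w_{0})$ at $t=1$ gives $d\E(w_{0})[w_{0}]=0$, and since $\norml{w_{0}}^{2}=1$ every $v\in X_{+}$ splits as $v=\rscalarl{w_{0}}{v}\,w_{0}+v^{\perp}$ with $v^{\perp}$ tangent to $\Sigma_{+}$ at $w_{0}$; the first summand is killed by homogeneity and the second because $w_{0}$ is critical on $\Sigma_{+}$, so $d\E(w_{0})\equiv 0$ on $X_{+}$. Inserting this into the displayed identity (with $\omega=\omega(\psi_{0})$) yields $dI(\psi_{0})[h]=\omega\scalarl{\psi_{0}}{h}$ for all $h\in X$, i.e.\ \eqref{eq:equazionemoltiplicatore}; reality of $\omega$ is immediate from \eqref{eq:moltiplicatore}.

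For the two-sided bound I would compute $\omega$ in two ways. Testing \eqref{eq:equazionemoltiplicatore} with $h=w_{0}$ and using $\Lambda_{+}\psi_{0}=a(\eta(w_{0}))w_{0}$, $\Lambda_{-}\psi_{0}=\eta(w_{0})$ gives
\begin{equation*}
	\omega = \normh{w_{0}}^{2} - \gamma\,a(\eta(w_{0}))^{-1}\int\rscalar{\nabla F(\psi_{0})}{w_{0}},
\end{equation*}
and the lower bound $(1-\gamma C_{\alpha,\lambda})\normh{w_{0}}^{2}\le\omega$ follows directly from \eqref{eq:stimaFw}, which applies because $\E(w_{0})>0$ by Lemma \ref{lem:stime_max}. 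For the upper bound I would instead test \eqref{eq:equazionemoltiplicatore} with $h=\psi_{0}$, obtaining $\omega\,\norml{\psi_{0}}^{2}=dI(\psi_{0})[\psi_{0}]$, and then use \eqref{eq:ipotesiAR} together with $F\ge 0$ and $\alpha>2$ to bound $dI(\psi_{0})[\psi_{0}]-2I(\psi_{0})=-\gamma\int(\scalar{\nabla F(\psi_{0})}{\psi_{0}}-2F(\psi_{0}))\le 0$, whence $\omega\,\norml{\psi_{0}}^{2}\le 2\E(w_{0})$.

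For the converse, given $\psi_{0}\in\Sigma^{\lambda}$ solving \eqref{eq:equazionemoltiplicatore} with $\omega>0$, set $\eta_{0}=\Lambda_{-}\psi_{0}$ and $w=\norml{\Lambda_{+}\psi_{0}}^{-1}\Lambda_{+}\psi_{0}$. Orthogonality of $\Lambda_{\pm}$ in $L^{2}$ gives $\norml{\Lambda_{+}\psi_{0}}^{2}=\lambda-\norml{\eta_{0}}^{2}=a(\eta_{0})^{2}$, so that $\psi_{0}=a(\eta_{0})w+\eta_{0}$ with $w\in\Sigma_{+}$, $\eta_{0}\in B_{\lambda}$. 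Using \eqref{eq:derivataJ} and \eqref{eq:equazionemoltiplicatore} one checks
\begin{equation*}
	d\J(\eta_{0})[\xi] = \omega\bigl(a(\eta_{0})\,da(\eta_{0})[\xi] + \rscalarl{\eta_{0}}{\xi}\bigr) = 0
	\qquad\text{for all }\xi\in X_{-},
\end{equation*}
since $\scalarl{\psi_{0}}{w}=a(\eta_{0})$, $\rscalarl{\psi_{0}}{\xi}=\rscalarl{\eta_{0}}{\xi}$ and $a(\eta_{0})\,da(\eta_{0})[\xi]=-\rscalarl{\eta_{0}}{\xi}$. To identify $\eta_{0}$ with the maximizer $\eta(w)$ of Proposition \ref{prop:massimo} I would note that $\J(\eta_{0})=I(\psi_{0})\ge\tfrac12\omega\,\norml{\psi_{0}}^{2}>0$; then \eqref{eq:spinge} forces $\norml{\eta_{0}}^{2}<\lambda/2$, and the strict concavity from Lemma \ref{lem:criticomax} makes the critical point in that region unique, so $\eta_{0}=\eta(w)$ and $\psi_{0}=\psi_{w}$. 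Reading the displayed identity backwards then gives $d\E(w)\equiv 0$ on $X_{+}$, so $w$ is a critical point of $\E$.

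The main obstacle is exactly this last identification in the converse: a priori $\eta_{0}$ is merely \emph{some} critical point of $\J$, whereas the uniqueness in Proposition \ref{prop:massimo} is only available in the strictly concave region $\{\J>0,\ \norml{\eta}^{2}<\lambda/2\}$. The sign hypothesis $\omega>0$ is what confines $\eta_{0}$ to this region — through $\J(\eta_{0})=I(\psi_{0})>0$ and \eqref{eq:spinge} — and without it one could not conclude $\eta_{0}=\eta(w)$. Everything else is a matter of assembling the identities \eqref{eq:derivataJ}, \eqref{eq:moltiplicatore} and \eqref{eq:equazionemoltiplicatore} with the $L^{2}$-orthogonality of $\Lambda_{\pm}$.
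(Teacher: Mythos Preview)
Your proposal is correct and matches the paper's approach: the paper defers everything except the two-sided estimate on $\omega$ to \cite[Proposition~2.13]{CotiZelati_Nolasco_2023}, and proves that estimate exactly as you do, via \eqref{eq:stimaFw} for the lower bound and \eqref{eq:ipotesiAR} (tested with $h=\psi_{0}$) for the upper bound. The details you supply for the remaining points---the $0$-homogeneity promotion from $\Sigma_{+}$ to $X_{+}$, the use of the identity $dI(\psi)[h]-\omega\scalarl{\psi}{h}=a^{-1}d\E(w)[v]$, and the converse via $\omega>0\Rightarrow\J(\eta_{0})>0$ together with \eqref{eq:spinge} and the uniqueness in Proposition~\ref{prop:massimo}---are precisely the natural way to unpack that reference.
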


\begin{proof}
	We only have to prove the estimate on the Lagrange multiplier. The
	other points follow as in proof of Proposition 2.13 of
	\cite{CotiZelati_Nolasco_2023}. Using \eqref{eq:stimaFw} we
	immediately deduce
	\begin{align*}
		\omega(\psi_{0}) &= a(\eta(w_{0}))^{-1} dI(\psi_{0})[w_{0}]
		\geq (1 - \gamma C_{\alpha,\lambda})\normh{w_{0}}^{2}.
	\end{align*}
	We also have, using \eqref{eq:ipotesiAR}, that
	\begin{equation*}
		\omega(\psi_{0}) = dI(\psi_{0})[\psi_{0}] \leq 2 I(\psi_{0}).
	\end{equation*}
\end{proof}

\section{The constrained minimization}

We introduce now the following minimization problem:
\begin{equation*}
	e(\lambda) = \inf_{w \in \Sigma_{+}} \E(w) = \inf_{w \in
	\Sigma_{+}} \left\{ \frac{1}{2} a(\eta(w))^{2}\normh{w}^{2} -
	\frac{1}{2} \normh{\eta(w)}^{2} - \gamma\int F(\psi_{w}) \right\}
\end{equation*}
where $\eta(w)$ is given in proposition \ref{prop:massimo}, $a(\eta(w))
= \sqrt{\lambda - \norml{\eta(w)}^{2}}$ and $\psi_{w} = a(\eta(w))w +
\eta(w)$.

The next lemma contains an estimate which will be essential in proving
(via the concentration compactness lemma \cite{Lions_1984,
Lions_1984-1}) convergence of minimizing sequences for $\E$.

\begin{lem}
	\label{lem:stimAe}
	For all $\lambda \in (0,1]$ we have that $0 < e(\lambda) <
	\frac{\lambda m}{2}$.
\end{lem}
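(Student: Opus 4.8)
The plan is to prove the two inequalities separately. The lower bound $e(\lambda) > 0$ is immediate: Lemma \ref{lem:stime_max} gives $\E(w) \geq \frac{\lambda}{2 S_2^2}\bigl(1 - \gamma(S_2^2 + 2\frac{\mu}{\alpha}\lambda^{\frac{\alpha-2}{2}}S^2_{\frac{4}{4-\alpha}})\bigr)$ for \emph{every} $w \in \Sigma_+$, a quantity that is independent of $w$ and strictly positive by \eqref{eq:ipotesigamma2}. Passing to the infimum over $w$ preserves this bound, so $e(\lambda) > 0$.

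All the difficulty is in the strict upper bound $e(\lambda) < \frac{\lambda m}{2}$, for which I would exhibit a single test function $w_\delta \in \Sigma_+$ concentrated near the bottom $m$ of the positive spectrum and satisfying $\E(w_\delta) < \frac{\lambda m}{2}$. Fix a Schwartz $\chi$ with $\hat\chi$ supported in the unit ball and a constant spinor $e_0$ living in the upper two components ($\beta e_0 = e_0$, $\abs{e_0} = 1$), and set $u_\delta(x) = \delta^{3/2}\chi(\delta x) e_0$, so that $\hat u_\delta$ is supported in $B_\delta(0)$ and $\norml{u_\delta}$ is constant in $\delta$. Let $w_\delta = \norml{\Lambda_+ u_\delta}^{-1}\Lambda_+ u_\delta \in \Sigma_+$. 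Because $\Lambda_+$ is a Fourier multiplier, $\hat w_\delta$ is still supported in $B_\delta(0)$, where $\sqrt{\abs{\xi}^2+m^2} \leq m + \frac{\delta^2}{2m}$; hence $\normh{w_\delta}^2 \leq m + \frac{\delta^2}{2m}$. Since $\Lambda_+(\xi) \to \Lambda_+(0)$ (the projection onto the upper components, fixing $e_0$) as $\xi \to 0$, one has $\norml{\Lambda_+ u_\delta - u_\delta} = O(\delta)$, so in physical space $w_\delta$ spreads at scale $\delta^{-1}$ with $\norml[p]{w_\delta}^p \sim \delta^{\frac{3(p-2)}{2}}$ for $p > 2$ and $\norml[\infty]{w_\delta} \to 0$. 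In particular $\abs{\sqrt\lambda w_\delta} < \rho$ eventually and, by the upper-component dominance $\scalar{w_\delta}{\beta w_\delta} = (1+O(\delta))\abs{w_\delta}^2$, assumption \eqref{eq:ipotesiStimaFdalbasso} yields the crucial \emph{gain} $\int F(\sqrt\lambda w_\delta) \geq c\,\delta^{\frac{3(\alpha-2)}{2}}$.

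To convert this into a bound on $\E(w_\delta) = \J[w_\delta](\eta(w_\delta))$ I would first discard the nonpositive $\eta(w_\delta)$-terms in the identity $\E(w_\delta) = \frac{\lambda}{2}\normh{w_\delta}^2 - \frac12\norml{\eta(w_\delta)}^2\normh{w_\delta}^2 - \frac12\normh{\eta(w_\delta)}^2 - \gamma\int F(\psi_{w_\delta})$, obtaining $\E(w_\delta) \leq \frac{\lambda}{2}\normh{w_\delta}^2 - \gamma\int F(\psi_{w_\delta})$, and then bound $\int F(\psi_{w_\delta})$ from below by Lemma \ref{lem:stimutile}. This produces the gain $\gamma\int F(\sqrt\lambda w_\delta)$ together with two error terms, $\gamma\bigl|\int \scalar{\nabla F(a w_\delta)}{\eta(w_\delta)}\bigr|$ and $\gamma(S_2^2 + \mu\lambda^{\frac{\alpha-2}{2}}S^2_{\frac{4}{4-\alpha}})\norml{\eta(w_\delta)}^2\normh{w_\delta}^2$ (here $a = a(\eta(w_\delta))$). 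To control $\eta(w_\delta)$ I would use that $\J[w_\delta]$ is strongly concave by Lemma \ref{lem:criticomax} ($d^2\J[w_\delta][\xi,\xi] \leq -\normh{\xi}^2$) while $d\J[w_\delta](0)[\xi] = -\gamma\int\scalar{\nabla F(\sqrt\lambda w_\delta)}{\xi}$; pairing the Euler equation for the maximizer with $\eta(w_\delta)$ then gives $\normh{\eta(w_\delta)} \leq C\gamma\norml{\nabla F(\sqrt\lambda w_\delta)}$, and \eqref{eq:ipotesistimegradbetter} gives $\norml{\nabla F(\sqrt\lambda w_\delta)}^2 \leq \lambda^\nu\norml[2\nu]{w_\delta}^{2\nu} \sim \delta^{3(\nu-1)}$ (note $2\nu \in (\alpha,3)$). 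Both error terms are therefore $O(\delta^{3(\nu-1)})$, and collecting everything gives $\E(w_\delta) \leq \frac{\lambda m}{2} + O(\delta^2) - c\gamma\,\delta^{\frac{3(\alpha-2)}{2}} + O(\delta^{3(\nu-1)})$.

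The final step is the exponent bookkeeping, and this is exactly where the hypotheses are used and where I expect the only real subtlety to lie. The restriction $\alpha \leq \frac{8}{3}$ forces $\frac{3(\alpha-2)}{2} < 2$, so the gain beats the kinetic excess $O(\delta^2)$ coming from the distance to the spectral bottom; and $\nu > \frac{\alpha}{2}$ from \eqref{eq:ipotesistimegradbetter} forces $3(\nu-1) > \frac{3(\alpha-2)}{2}$, so the gain also beats the error terms generated by the negative-spectrum correction $\eta(w_\delta)$. Hence for $\delta$ small the negative term dominates, $\E(w_\delta) < \frac{\lambda m}{2}$, and $e(\lambda) \leq \E(w_\delta) < \frac{\lambda m}{2}$. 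The delicate point is thus the simultaneous requirement that the test function sit close enough to $m$ (kinetic cost only $O(\delta^2)$) yet feel the nonlinearity strongly enough (gain $\delta^{\frac{3(\alpha-2)}{2}}$ with a small exponent), a compatibility guaranteed precisely by $\alpha \in (2,\frac{8}{3}]$ and $\nu \in (\frac{\alpha}{2},\frac{3}{2})$.
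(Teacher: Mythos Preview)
Your overall plan is sound and the exponent bookkeeping is exactly right, but the route diverges from the paper's and one step is not justified as written. The paper never proves that $\normh{\eta(\varphi_\epsilon)}$ is small in $\epsilon$; instead it \emph{keeps} the terms $-\tfrac14\norml{\eta}^{2}\normh{w}^{2}-\tfrac12\normh{\eta}^{2}$ (rather than discarding them) and splits the dangerous cross term via Young as $\gamma\int\abs{\nabla F(aw_\epsilon)}\abs{\eta}\le C\epsilon^{3(\nu-1)}+\tfrac{\gamma S_2^{2}}{2}\normh{\eta}^{2}$, the second piece being absorbed into $-\tfrac12\normh{\eta}^{2}$ thanks to $\gamma S_2^{2}<1$. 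Only the a priori bound $\normh{\eta}\le\sqrt{\lambda}\normh{w}$ from Lemma~\ref{lem:stime_grad} is ever used. This is softer: no quantitative control of the maximizer is needed.

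Your alternative route---prove $\normh{\eta(w_\delta)}=O(\delta^{3(\nu-1)/2})$ directly and then discard the negative $\eta$-terms---can also be made to work, but the concavity argument you sketch has a gap: Lemma~\ref{lem:criticomax} gives $d^{2}\J\le-\normh{\cdot}^{2}$ only at points where $\J(\eta)\ge0$, and you have not checked that the whole segment from $0$ to $\eta(w_\delta)$ stays in that region (this is not automatic, since the concavity you want to invoke is itself conditional on $\J\ge0$). The desired bound is nonetheless true and follows more simply from $\J(\eta(w_\delta))\ge\J(0)$ together with convexity of $F$: this yields $\tfrac12\norml{\eta}^{2}\normh{w_\delta}^{2}+\tfrac12\normh{\eta}^{2}\le\gamma\norml{\nabla F(\sqrt\lambda w_\delta)}\,\norml{\eta}+o(1)\norml{\eta}^{2}$, hence $\normh{\eta}=O(\norml{\nabla F(\sqrt\lambda w_\delta)})$. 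A second point that needs more care is the pointwise claim $\scalar{w_\delta}{\beta w_\delta}=(1+O(\delta))\abs{w_\delta}^{2}$: since $\Lambda_{+}$ is a nonlocal Fourier multiplier this need not hold uniformly in $x$. The paper avoids any such pointwise statement by comparing $\int F(\sqrt\lambda\varphi_\epsilon)$ with $\int F(\sqrt\lambda w_\epsilon)$ via convexity of $F$ and the $L^{2}$-bound $\norml{\varphi_\epsilon-w_\epsilon}=O(\epsilon)$, producing an error of order $\epsilon^{(3\nu-1)/2}$ that is again beaten by the gain.
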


\begin{proof}
	From lemma \ref{lem:stime_max} we have that 
	\begin{equation*}
		e(\lambda) \geq \frac{\lambda}{2 S_{2}^{2}} \left(1 - \gamma
		\left(S_{2}^{2} + 2\frac{\mu}{\alpha}
		\lambda^{\frac{\alpha-2}{2}}
		S^{2}_{\frac{4}{4-\alpha}}\right)\right) > 0.
	\end{equation*}
	
	Using lemma \ref{lem:stimautile} we deduce that
	\begin{align*}
		\E(w) &= I(\psi_{w}) = \frac{1}{2}a(\eta(w))^{2}\normh{w}^{2}
		- \frac{1}{2}\normh{\eta(w)}^{2} - \gamma \int F(\psi_{w}) \\
		&\leq \frac{\lambda}{2}\normh{w}^{2} -
		\frac{1}{2}\norml{\eta(w)}^{2}\normh{w}^{2} -
		\frac{1}{2}\normh{\eta(w)}^{2} - \gamma \int
		F(\sqrt{\lambda}w) \\
		&\qquad - \gamma \int \scalar{\nabla F(aw)}{\eta(w)} + \gamma
		\left(S_{2}^{2} + \mu \lambda^{\frac{\alpha-2}{2}}
		S^{2}_{\frac{4}{4-\alpha}}\right)
		\norml{\eta(w)}^{2} \normh{w}^{2} \\
		&\qquad + \gamma \left(S_{2}^{2} + \mu \lambda^{\frac{\alpha -
		2}{2}} S^{2}_{\frac{4}{4-\alpha}}\right) \normh{\eta(w)}^{2} +
		\gamma \mu \lambda^{\frac{\alpha - 2}{2}} \int
		\abs{w}^{\alpha-2} \abs{\eta(w)}^{2}
	\end{align*}
	Since $\gamma \left(S_{2}^{2} + \mu \lambda^{\frac{\alpha-2}{2}}
	S^{2}_{\frac{4}{4-\alpha}}\right) < \frac{1}{4}$ by
	\eqref{eq:ipotesigamma2} we have, for all $w \in \Sigma_{+}$:
	\begin{multline*}
		\E(w) \leq \frac{\lambda}{2}\normh{w}^{2} -
		\frac{1}{4}\norml{\eta(w)}^{2}\normh{w}^{2} -
		\frac{1}{4}\normh{\eta(w)}^{2} \\
		- \gamma \int F(\sqrt{\lambda}w) - \gamma \int \scalar{\nabla
		F(aw)}{\eta(w)} + \gamma \mu \lambda^{\frac{\alpha - 2}{2}} \int
		\abs{w}^{\alpha-2} \abs{\eta(w)}^{2}.
	\end{multline*}
	
	Fix $w_{1} \in H^{1}(\mathbb{R}^{3}, \mathbb{C}) \cap
	C^{1}(\mathbb{R}^{3}, \mathbb{C})$ such that $\norml{w_{1}} = 1$
	and let $w = \left(\begin{smallmatrix} w_{1} \\ 0
	\end{smallmatrix}\right) \in \mathbb{C}^{4}$ and $w_{\epsilon}(x)
	= \epsilon^{3/2} w(\epsilon x)$. Then $\norml{w_{\epsilon}} = 1$
	for all $\epsilon > 0$ and $\norml[\infty]{w_{\epsilon}(x)} =
	\epsilon^{3/2} \norml[\infty]{w(x)} \to 0$ as $\epsilon \to 0$. We
	also have that $\abs{\scalar{w(x)}{\beta w(x)}} =
	\abs{w_{1}(x)}^{2}$.
	
	The same computation of Lemma 2.16 in 
	\cite{CotiZelati_Nolasco_2023} show that
	\begin{align*}
		&\normh{w_{\epsilon}}^{2} - m\norml{w_{\epsilon}}^{2} \leq
		\frac{\epsilon^{2}}{2m} \int \abs{q}^{2}
		\abs{\hat{w}_{1}(q)}^{2}\\
		&\normh{w_{\epsilon}}^{2} \leq m + C\epsilon^{2} \\
		&\normh{w_{\epsilon} - \Lambda_{+}w_{\epsilon}}^{2} \leq
		\frac{\epsilon^{2}}{4m} \int \abs{p}^{2}
		\abs{\hat{w}_{1}(p)}^{2}\\
		&\labs{1 - \norml{\Lambda_{+}w_{\epsilon}}}^{2} \leq
		\frac{\epsilon^{2}}{4m} \int \abs{p}^{2}
		\abs{\hat{w}_{1}(p)}^{2}.
	\end{align*}
	We deduce from this that for $\epsilon > 0$ small enough 
	$\norml{\Lambda_{+}w_{\epsilon}} > \frac{1}{2}$.
	
	Let  
	\begin{equation*}
		\varphi_{\epsilon}(x) = \norml{\Lambda_{+} w_{\epsilon}}^{-1}
		\Lambda_{+} w_{\epsilon}(x) \in \Sigma_{+}.
	\end{equation*}
	We have that
	\begin{align*}
		&\normh{\varphi_{\epsilon}} \leq \norml{\Lambda_{+}
		w_{\epsilon}}^{-1} \normh{w_{\epsilon}} \leq \sqrt{m} + 
		C\epsilon\\
		&\normh{w_{\epsilon} - \varphi_{\epsilon}} \leq
		\frac{\epsilon}{2\sqrt{m}} (2 + \normh{\varphi_{\epsilon}})
		\left(\int \abs{p}^{2} \abs{\hat{w}_{1}(p)}^{2}\right)^{1/2} =
		\frac{\epsilon}{\sqrt{m}} (2 + \normh{\varphi_{\epsilon}})
		\norml{\nabla w_{1}}\\
		&\norml{\varphi_{\epsilon} - w_{\epsilon}} \leq
		\frac{\epsilon}{\sqrt{m}} \left(\int \abs{p}^{2}
		\abs{\hat{w}_{1}(p)}^{2}\right)^{1/2} =
		\frac{\epsilon}{\sqrt{m}} \norml{\nabla w_{1}}
	\end{align*}
	and hence, for $\epsilon$ small enough
	\begin{equation*}
		\normh{\varphi_{\epsilon}} \leq \sqrt{m} + 1, \qquad
		\normh{\eta(\varphi_{\epsilon})} \leq
		a(\eta(\varphi_{\epsilon})) \normh{\varphi_{\epsilon}} \leq
		\sqrt{\lambda}(\sqrt{m} + 1).
	\end{equation*}
	
	We now estimate the functional for small $\epsilon > 0$:
	\begin{multline}
		\label{eq:stime_funzionale}
		\E(\varphi_{\epsilon}) \leq \frac{\lambda m}{2} +
		\frac{\lambda}{2}(\normh{\varphi_{\epsilon}}^{2} - m
		\norml{\varphi_{\epsilon}}^{2}) -
		\frac{1}{4}\norml{\eta(\varphi_{\epsilon})}^{2}
		\normh{\varphi_{\epsilon}}^{2} -
		\frac{1}{4}\normh{\eta(\varphi_{\epsilon})}^{2} \\
		- \gamma \int F(\sqrt{\lambda}\varphi_{\epsilon}) - \gamma
		\int \scalar{\nabla F(a\varphi_{\epsilon})}
		{\eta(\varphi_{\epsilon})} + \gamma \mu
		(\sqrt{\lambda})^{\alpha - 2} \int
		\abs{\varphi_{\epsilon}}^{\alpha-2}
		\abs{\eta(\varphi_{\epsilon})}^{2}
	\end{multline}
	The first term can be estimated as follows:
	\begin{equation*}
		\normh{\varphi_{\epsilon}}^{2} -
		m\norml{\varphi_{\epsilon}}^{2} \leq \frac{\epsilon^{2}}{2m}
		(3 + \normh{\varphi_{\epsilon}})^{2} \norml{\nabla w_{1}}^{2} \leq \frac{\epsilon^{2}}{2m}
		(4 + \sqrt{m})^{2} \norml{\nabla w_{1}}^{2} \leq 
		c_{1}\epsilon^{2}.
	\end{equation*}
	
	Using \eqref{eq:ipotesiStimaFdalbasso} and
	\eqref{eq:ipotesistimegradbetter} and since
	$\norml[\infty]{w_{\epsilon}} \to 0$ we have that
	\begin{align*}
		\int F(\sqrt{\lambda}\varphi_{\epsilon}) &= \int
		F(\sqrt{\lambda}w_{\epsilon}) + \int \scalar {\nabla
		F(\sqrt{\lambda}w_{\epsilon})}{\varphi_{\epsilon} -
		w_{\epsilon}} \\
		&\qquad + \int \scalar {\nabla F(\sqrt{\lambda(}w_{\epsilon} +
		\theta(\varphi_{\epsilon} - w_{\epsilon}))) - \nabla
		F(\sqrt{\lambda}w_{\epsilon})}{\varphi_{\epsilon} -
		w_{\epsilon}} \\
		&\geq \bar{\gamma} \lambda^{\frac{\alpha}{2}} \int
		\abs{\scalar{w_{\epsilon}}{\beta
		w_{\epsilon}}}^{\frac{\alpha}{2}} - \lambda^{\frac{\nu}{2}}
		\norml[2\nu]{w_{\epsilon}}^{\nu} \norml{\varphi_{\epsilon} -
		w_{\epsilon}} \\
		&\qquad - \int \abs{\varphi_{\epsilon} - w_{\epsilon}}^{2} -
		\mu \int \abs{w_{\epsilon}}^{\alpha-2} \abs{\varphi_{\epsilon}
		- w_{\epsilon}}^{2} \\
		&\qquad - \mu \int ( \abs{\varphi_{\epsilon}} +
		\abs{w_{\epsilon}} )^{\alpha-2} \abs{\varphi_{\epsilon} -
		w_{\epsilon}}^{2} \\
		&\geq \bar{\gamma} \lambda^{\frac{\alpha}{2}} \int
		\abs{\scalar{w_{\epsilon}}{\beta
		w_{\epsilon}}}^{\frac{\alpha}{2}} - \lambda^{\frac{\nu}{2}}
		\norml[2\nu]{w_{\epsilon}}^{\nu} \norml{\varphi_{\epsilon} -
		w_{\epsilon}} \\
		&\qquad - \int \abs{\varphi_{\epsilon} - w_{\epsilon}}^{2} -
		2\mu \int \abs{w_{\epsilon}}^{\alpha-2} \abs{\varphi_{\epsilon}
		- w_{\epsilon}}^{2} \\
		&\qquad - 2\mu \int \abs{\varphi_{\epsilon}}^{\alpha-2}
		\abs{\varphi_{\epsilon} -
		w_{\epsilon}}^{2} \\
		&\geq \epsilon^{\frac{3(\alpha-2)}{2}} \bar{\gamma}
		\lambda^{\frac{\alpha}{2}} \int \abs{w_{1}}^{\alpha} -
		\lambda^{\frac{\nu}{2}} \epsilon^{\frac{3(\nu -
		1)}{2}}\norml[2\nu]{w_{1}}^{\nu} \norml{\varphi_{\epsilon} -
		w_{\epsilon}} \\
		&\qquad - \norml{\varphi_{\epsilon} - w_{\epsilon}}^{2} - 2\mu
		\left(\norml[\alpha]{w_{\epsilon}}^{\alpha-2} +
		\norml[\alpha]{\varphi_{\epsilon}}^{\alpha-2}\right)
		\norml[\alpha]{\varphi_{\epsilon} - w_{\epsilon}}^{2} \\
		&\geq \epsilon^{\frac{3(\alpha-2)}{2}} \bar{\gamma}
		\lambda^{\frac{\alpha}{2}} \int \abs{w_{1}}^{\alpha} - c_{2}
		\epsilon^{\frac{3\nu + 1}{2}} - c_{3}\epsilon^{2}.
 	\end{align*}

	For the next term we observe that 
	\begin{equation*}
		\int \scalar{\nabla F(a\varphi_{\epsilon})}
		{\eta(\varphi_{\epsilon})} \leq \int \abs{\nabla
		F(a\varphi_{\epsilon}) - \nabla F(aw_{\epsilon})}
		\abs{\eta(\varphi_{\epsilon})} + \int \abs{\nabla
		F(aw_{\epsilon})} \abs{\eta(\varphi_{\epsilon})}.
	\end{equation*}
	
	Let us analyze the first term in this expression: we have
	\begin{align*}
		&\int \abs{\nabla F(a\varphi_{\epsilon}) - \nabla
		F(aw_{\epsilon})} \abs{\eta(\varphi_{\epsilon})} \\
		&\quad \leq \int
		\left(\abs{a(\varphi_{\epsilon} - w_{\epsilon})} + \abs{aw_{\epsilon}} + \mu
		\abs{a(\varphi_{\epsilon} - w_{\epsilon})}^{\alpha-2} + \mu
		\abs{aw_{\epsilon}}^{\alpha-2} \right) a
		\abs{\varphi_{\epsilon} - w_{\epsilon}}
		\abs{\eta(\varphi_{\epsilon})} \\
		&\quad \leq a^{2} \int \left(\abs{(\varphi_{\epsilon} -
		w_{\epsilon})}^{2} + \abs{w_{\epsilon}}\abs{\varphi_{\epsilon}
		- w_{\epsilon}} \right) \abs{\eta(\varphi_{\epsilon})} \\
		&\qquad + \mu a^{\alpha-1} \int
		\left(\abs{\varphi_{\epsilon} - w_{\epsilon}}^{\alpha-1} +
		\abs{w_{\epsilon}}^{\alpha-2} \abs{\varphi_{\epsilon} -
		w_{\epsilon}}\right) \abs{\eta(\varphi_{\epsilon})} \\
		&\quad \leq a^{2} \norml[3]{\varphi_{\epsilon} -
		w_{\epsilon}}^{2} \norml[3]{\eta(\varphi_{\epsilon})} + a^{2}
		\norml[3]{w_{\epsilon}} \norml[3]{\varphi_{\epsilon} -
		w_{\epsilon}} \norml[3]{\eta(\varphi_{\epsilon})}\\
		&\qquad + \mu a^{\alpha-1}
		\norml[\alpha]{\varphi_{\epsilon} - w_{\epsilon}}^{\alpha-1}
		\norml[\alpha]{\eta(\varphi_{\epsilon})} + \mu a^{\alpha-1}
		\epsilon^{\frac{3(\alpha-2)}{2}}
		\norml[\infty]{w_{1}}^{\alpha-2}
		\norml{\varphi_{\epsilon} - w_{\epsilon}}
		\norml{\eta(\varphi_{\epsilon})} \\
		&\quad \leq a^{2} \norml[3]{\eta(\varphi_{\epsilon})} \left(
		S_{3}^{2} \normh{\varphi_{\epsilon} - w_{\epsilon}}^{2} +
		\norml[3]{w_{\epsilon}} S_{3} \normh{\varphi_{\epsilon} -
		w_{\epsilon}} \right)\\
		&\qquad + \mu a^{\alpha-1} S_{\alpha}^{\alpha}
		\normh{\eta(\varphi_{\epsilon})} \normh{\varphi_{\epsilon} -
		w_{\epsilon}}^{\alpha-1} 
		\\
		&\qquad 
		+ \mu a^{\alpha-1}
		\epsilon^{\frac{3(\alpha-2)}{2}}
		\norml[\infty]{w_{1}}^{\alpha-2}
		\norml{\varphi_{\epsilon} - w_{\epsilon}}
		\norml{\eta(\varphi_{\epsilon})} 
		\\
		&\quad \leq \lambda^{\frac{3}{2}} S_{3}^{2} (\sqrt{m} + 1)
		\left( \epsilon^{2} S_{3} \frac{(3 + \sqrt{m})^{2}}{m}
		\norml{\nabla w_{1}}^{2} + \epsilon^{\frac{3}{2}}
		\norml[3]{w_{1}} \frac{3+\sqrt{m}}{\sqrt{m}} \norml{\nabla
		w_{1}} \right)\\
		&\qquad + \epsilon^{\alpha-1} \mu
		\lambda^{\frac{2\alpha-1}{2}} S_{\alpha}^{\alpha} (\sqrt{m} + 1)
		\left(\frac{3+\sqrt{m}}{\sqrt{m}}\right)^{\alpha-1}
		\norml{\nabla w}^{\alpha-1} \\
		&\qquad + \epsilon^{\frac{3\alpha-4}{2}} 
		\mu \lambda^{\frac{2\alpha-1}{2}}
		S_{2}^{2} \norml[\infty]{w_{1}}^{\alpha-2} 
		\frac{(\sqrt{m} + 1)}{\sqrt{m}} \norml{\nabla w_{1}} \\
		&\quad \leq c_{4}\epsilon^{2} + c_{5}\epsilon^{\frac{3}{2}} +
		c_{6}\epsilon^{\alpha-1} + c_{7}
		\epsilon^{\frac{3\alpha-4}{2}}
	\end{align*}
	while for the second term we have
	\begin{multline*}
		\int \abs{\nabla F(aw_{\epsilon})}
		\abs{\eta(\varphi_{\epsilon})} \leq a^{\nu} \int
		\abs{w_{\epsilon}}^{\nu} \abs{\eta(\varphi_{\epsilon})}
		\leq \frac{1}{2}a^{2\nu} \norml[2\nu]{w_{\epsilon}}^{2\nu} +
		\frac{1}{2} \norml{\eta(\varphi_{\epsilon})}^{2}  \\
		\leq \epsilon^{3(\nu-1)} \frac{\lambda^{\nu}}{2}
		\norml[2\nu]{w_{1}}^{2\nu} + \frac{S_{2}^{2}}{2}
		\normh{\eta(\varphi_{\epsilon})}^{2} \leq c_{8}
		\epsilon^{3(\nu-1)} + \frac{S_{2}^{2}}{2}
		\normh{\eta(\varphi_{\epsilon})}^{2}
	\end{multline*}
	The last term in \eqref{eq:stime_funzionale} can be estimated as 
	follows:
	\begin{align*}
		\int \abs{\varphi_{\epsilon}}^{\alpha-2}
		\abs{\eta(\varphi_{\epsilon})}^{2} &\leq
		\epsilon^{\frac{3(\alpha-2)}{2}}
		\norml[\infty]{\varphi_{1}}^{\alpha - 2}
		\norml{\eta(\varphi_{\epsilon})}^{2} \leq
		\epsilon^{\frac{3(\alpha-2)}{2}}
		\norml[\infty]{\varphi_{1}}^{\alpha-2} S_{2}^{2}
		\normh{\eta(\varphi_{\epsilon})}^{2}
	\end{align*}
	We therefore deduce that
	\begin{align*}
		\E(\varphi_{\epsilon}) &\leq \frac{\lambda m}{2} + c_{1}
		\epsilon^{2} - \frac{1}{4}\norml{\eta(\varphi_{\epsilon})}^{2}
		\normh{\varphi_{\epsilon}}^{2} -
		\frac{1}{4}\normh{\eta(\varphi_{\epsilon})}^{2}
		-\epsilon^{\frac{3(\alpha-2)}{2}} \gamma\bar{\gamma}
		\lambda^{\frac{\alpha}{2}} \int \abs{w_{1}}^{\alpha} \\
		&\qquad + c_{2} \gamma \epsilon^{\frac{3\nu + 1}{2}} + c_{3}
		\gamma \epsilon^{2} + c_{4} \gamma\epsilon^{2} + c_{5}
		\gamma\epsilon^{\frac{3}{2}} + c_{6}\gamma\epsilon^{\alpha-1}
		+ c_{7}\gamma \epsilon^{\frac{3\alpha-4}{2}} \\
		&\qquad + c_{8} \gamma \epsilon^{3(\nu-1)} + \gamma
		\frac{S_{2}^{2}}{2} \normh{\eta(\varphi_{\epsilon})}^{2} +
		\epsilon^{\frac{3(\alpha-2)}{2}}
		\gamma\norml[\infty]{\varphi_{1}}^{\alpha-2} S_{2}^{2}
		\normh{\eta(\varphi_{\epsilon})}^{2}
	\end{align*}
	
	Since $\gamma S_{2}^{2} < \frac{1}{4}$ by \eqref{eq:ipotesigamma2}
	and
	\begin{equation*}
		\frac{3(\alpha-2)}{2} < \min\left\{2, \frac{3\nu + 1}{2},
		\frac{3}{2}, \alpha-1, \frac{3\alpha-4}{2}, 3(\nu-1) \right\}
		\qquad \text{for all } \alpha \in (2, \frac{8}{3}]
	\end{equation*}
	we deduce that 
	\begin{equation*}
		\E(\varphi_{\epsilon}) < \frac{\lambda m}{2}
	\end{equation*}
	for $\epsilon$ small enough and for all $\lambda \in (0,1]$ and
	hence $e(\lambda) < \frac{\lambda m}{2}$.
\end{proof}

\begin{prop}
	\label{prop:stimee}
	For all $\lambda \in (0,1)$ and $\theta > 1$ such that
	$\theta\lambda < 1$ we have that
	\begin{equation*}
		\label{eq:stimee}
		e(\theta \lambda)  < \theta e(\lambda).
	\end{equation*}
\end{prop}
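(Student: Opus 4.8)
The plan is to exploit the scaling $\eta \mapsto \sqrt{\theta}\,\eta$, which maps $B_{\lambda}$ bijectively onto $B_{\theta\lambda}$ and sends the field $\psi = \aeta w + \eta \in \Sigma^{\lambda}$ to $\sqrt{\theta}\,\psi \in \Sigma^{\theta\lambda}$. The first point is the algebraic identity it produces: writing $a = \sqrt{\lambda - \norml{\eta}^{2}}$, one has $\sqrt{\theta\lambda - \norml{\sqrt{\theta}\eta}^{2}} = \sqrt{\theta}\,a$, so that $J_{w}^{\theta\lambda}(\sqrt{\theta}\,\eta) = \theta\bigl(\tfrac{1}{2}a^{2}\normh{w}^{2} - \tfrac{1}{2}\normh{\eta}^{2}\bigr) - \gamma\int F(\sqrt{\theta}\,\psi)$. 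For the nonlinear part I would use the superhomogeneity that follows from \eqref{eq:ipotesiAR}: since $\frac{d}{dt}\log F(t\phi) = \scalar{\nabla F(t\phi)}{t\phi}\,(tF(t\phi))^{-1} \geq \frac{\alpha}{t}$ wherever $F(t\phi) > 0$, integration gives $F(\sqrt{\theta}\,\phi) \geq \theta^{\alpha/2}F(\phi)$ for every $\theta \geq 1$ (and trivially where $F = 0$). Substituting $\tfrac{1}{2}a^{2}\normh{w}^{2} - \tfrac{1}{2}\normh{\eta}^{2} = \J(\eta) + \gamma\int F(\psi)$ then yields, for all $\eta \in B_{\lambda}$,
\[
	J_{w}^{\theta\lambda}(\sqrt{\theta}\,\eta) \leq \theta\,\J(\eta) - \gamma\bigl(\theta^{\alpha/2} - \theta\bigr)\int F(\psi).
\]
Since $\alpha > 2$ forces $\theta^{\alpha/2} - \theta > 0$, taking the supremum over $\eta$ gives $\E[\theta](w) \leq \theta\,\E(w) - \gamma(\theta^{\alpha/2}-\theta)\int F(\tilde\psi_{w})$, where $\tilde\psi_{w} = \theta^{-1/2}\psi_{w}^{\theta\lambda}$ is the rescaling of the maximizer at level $\theta\lambda$ (unique by Proposition \ref{prop:massimo}, valid since $\theta\lambda < 1$). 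Dropping the last term and taking the infimum over $w \in \Sigma_{+}$ already gives the non-strict bound $e(\theta\lambda) \leq \theta\, e(\lambda)$; the entire content is to keep that correction term and bound it below.

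The heart of the proof is thus a lower bound $\int F(\tilde\psi_{w_{n}}) \geq c > 0$ along a minimizing sequence $w_{n} \in \Sigma_{+}$ for $e(\lambda)$, and this is exactly where I would use the strict gap $e(\lambda) < \frac{\lambda m}{2}$ of Lemma \ref{lem:stimAe}, which quantifies that the nonlinearity is not negligible. Lemma \ref{lem:stime_max} bounds $\normh{w_{n}}$, while evaluating $\J$ at $\eta = 0$ gives $\E(w_{n}) \geq \frac{\lambda}{2}\normh{w_{n}}^{2} - \gamma\int F(\sqrt{\lambda}\,w_{n}) \geq \frac{\lambda m}{2} - \gamma\int F(\sqrt{\lambda}\,w_{n})$, using $\normh{w_{n}}^{2} \geq m\norml{w_{n}}^{2} = m$. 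Rearranging,
\[
	\gamma \int F(\sqrt{\lambda}\,w_{n}) \geq \frac{\lambda m}{2} - \E(w_{n}) \longrightarrow \frac{\lambda m}{2} - e(\lambda) > 0,
\]
so $\int F(\sqrt{\lambda}\,w_{n})$ is bounded below by a positive constant for $n$ large.

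It remains to transfer this from $\sqrt{\lambda}\,w_{n}$ to $\tilde\psi_{w_{n}} = a(\tilde\eta_{n})w_{n} + \tilde\eta_{n}$, and I expect this transfer step to be the main obstacle. The key fact is that the maximizers are small: testing the Euler equation $dJ_{w_{n}}^{\theta\lambda}(\psi_{w_{n}}^{\theta\lambda})$ against the maximizer, estimating the $w$-term by \eqref{eq:stimaFw} and the remaining term by \eqref{eq:definiscimu}, and absorbing with $\gamma C_{\alpha,\lambda} < \tfrac{1}{4}$ exactly as in Lemma \ref{lem:stime_grad}, gives $\normh{\tilde\eta_{n}} = O(\gamma)$ with constant controlled by the bounded $\normh{w_{n}}$. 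Hence $\normh{\tilde\psi_{w_{n}} - \sqrt{\lambda}\,w_{n}} = O(\gamma)$, and the mean value estimate \eqref{eq:stimegradfeps} together with the uniform $L^{p}$ bounds from $\normh{w_{n}} \leq \text{const}$ gives $\labs{\int F(\tilde\psi_{w_{n}}) - \int F(\sqrt{\lambda}\,w_{n})} = O(\gamma)$. Since the gap $\frac{\lambda m}{2} - e(\lambda)$ is itself of order $\gamma$ (as the test-function construction of Lemma \ref{lem:stimAe} shows), for $\gamma \leq \gamma_{0}$ the transfer error is strictly dominated, so $\int F(\tilde\psi_{w_{n}}) \geq c > 0$ for $n$ large. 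Plugging back into the key inequality, $e(\theta\lambda) \leq \E[\theta](w_{n}) \leq \theta\,\E(w_{n}) - \gamma(\theta^{\alpha/2}-\theta)\,c$, and letting $n \to \infty$ yields $e(\theta\lambda) \leq \theta\, e(\lambda) - \gamma(\theta^{\alpha/2}-\theta)\,c < \theta\, e(\lambda)$. The delicate quantitative point is precisely ensuring the transfer error stays below the gap, which is where the smallness of $\gamma_{0}$ is used.
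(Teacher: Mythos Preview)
Your scaling framework, the superhomogeneity from \eqref{eq:ipotesiAR}, and the resulting key inequality
\[
\E[\theta](w) \leq \theta\,\E(w) - \gamma\bigl(\theta^{\alpha/2}-\theta\bigr)\int F\bigl(\theta^{-1/2}\psi_{w}^{\theta\lambda}\bigr)
\]
are exactly what the paper uses, and you correctly identify that the whole point is a uniform lower bound $\int F(\tilde\psi_{w_{n}}) \geq c > 0$ along a minimizing sequence. The difficulty is your transfer step, which does not close.

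First, the Euler-equation estimate gives $\normh{\tilde\eta_{n}}^{2} \leq \gamma\lambda C_{\alpha,\lambda}\normh{w_{n}}^{2}$, so $\normh{\tilde\eta_{n}} = O(\sqrt{\gamma})$, not $O(\gamma)$; the transfer error is then $O(\sqrt{\gamma})$. More seriously, your claim that ``the gap is of order $\gamma$, so the transfer error is strictly dominated'' fails quantitatively. The lower bound you derive is $\int F(\sqrt{\lambda}w_{n}) \geq \gamma^{-1}\bigl(\tfrac{\lambda m}{2}-e(\lambda)\bigr)$, and Lemma~\ref{lem:stime_max} gives only $\tfrac{\lambda m}{2}-e(\lambda) \leq C\gamma$ (an \emph{upper} bound). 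The test-function construction in Lemma~\ref{lem:stimAe} balances the $\epsilon^{2}$ term (carrying no factor of $\gamma$) against the $\gamma\epsilon^{3(\alpha-2)/2}$ term, and optimizing in $\epsilon$ yields a gap of order $\gamma^{4/(10-3\alpha)}$, strictly smaller than $\gamma$ for every $\alpha\in(2,\tfrac{8}{3}]$. Hence $\gamma^{-1}\bigl(\tfrac{\lambda m}{2}-e(\lambda)\bigr) \to 0$ as $\gamma\to 0$, and there is no reason it should dominate the $O(\sqrt{\gamma})$ transfer error; indeed for $\alpha$ close to $\tfrac{8}{3}$ the comparison goes the wrong way. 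Moreover, the presence of $\tilde\eta_{n}$ can genuinely kill $F$ (think of $F(\phi)=\abs{\scalar{\phi}{\beta\phi}}^{\alpha/2}$ with matched upper and lower components), so a naive mean-value transfer is not expected to work without further input.

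The paper sidesteps this entirely by a contradiction argument that uses hypothesis~\eqref{eq:ipotesiES}, which you never invoke. Assuming $\int F(\tilde\psi_{w_{n}}) \to 0$ along a sequence with $\E(w_{n}) < \tfrac{1}{2}(\tfrac{\lambda m}{2}+e(\lambda))$, one gets $\int F(\psi_{w_{n}}^{\theta\lambda}) \to 0$; then \eqref{eq:ipotesiES} forces $\int\scalar{\nabla F(\psi_{w_{n}}^{\theta\lambda})}{w_{n}} \to 0$ and $\int\scalar{\nabla F(\psi_{w_{n}}^{\theta\lambda})}{\eta_{\theta\lambda}(w_{n})} \to 0$. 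Feeding this into the Euler equation $dJ^{\theta\lambda}_{w_{n}}(\eta_{\theta\lambda}(w_{n}))[\eta_{\theta\lambda}(w_{n})]=0$ gives $\normh{\eta_{\theta\lambda}(w_{n})}\to 0$, whence $\E[\theta](w_{n}) \geq \tfrac{\theta\lambda m}{2}+o(1)$, contradicting $\E[\theta](w_{n})\leq\theta\E(w_{n}) < \tfrac{\theta\lambda m}{2}$. This is qualitative and requires no comparison of constants; the missing ingredient in your sketch is precisely \eqref{eq:ipotesiES}.
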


\begin{proof}
	Let $\theta > 1$ be such that $\lambda \theta \in (0,1)$.
	Take $w \in \Sigma_{+}$ and let $\etl(w) \in B_{\theta
	\lambda}$ be the function whose existence follows from proposition
	\ref{prop:massimo}. Let $\pel(w) = a_{\theta\lambda}(\etl(w))w + 
	\etl(w)
	\in \Sigma_{\theta\lambda}$. We have that
	\begin{equation*}
		\mathcal{E}_{\theta\lambda}(w) = \frac{1}{2}
		\normh{a_{\theta\lambda}(\etl(w))w}^{2} - \frac{1}{2}
		\normh{\etl(w)}^{2} - \gamma \int F(\pel(w)).
	\end{equation*}
	
	We recall that from assumption \eqref{eq:ipotesiAR} follows that
	\begin{equation*}
		F(\sigma x) \geq \sigma^{\alpha} F(x) \geq \sigma^{2} F(x) + 
		\left(\sigma^{\alpha} - \sigma^{2}\right) F(x) \qquad 
		\text{for all } x \in \mathbb{C}^{4}, \sigma \geq 1.
	\end{equation*}

	We have that for all $\eta \in B_{\theta
	\lambda}$ 
	\begin{equation*}
		a_{\lambda \theta}(\eta) = \sqrt{\lambda \theta -
		\norml{\eta}^{2}} = \sqrt{\theta} \sqrt{\lambda -
		\norml{\frac{\eta}{\sqrt{\theta}}}^{2}} = \sqrt{\theta}
		a_{\lambda}(\frac{\eta}{\sqrt{\theta}})
	\end{equation*}
	and hence we have that
	\begin{align*}
		e(\theta\lambda) &\leq \mathcal{E}_{\theta\lambda}(w) = \theta
		\left[\frac{1}{2} \normh{a_{\lambda}
		(\frac{\etl(w)}{\sqrt{\theta}})w}^{2} - \frac{1}{2}
		\normh{\frac{\etl(w)}{\sqrt{\theta}}}^{2} -
		\frac{\gamma}{\theta}\int F\left(\sqrt{\theta}
		\frac{\pel(w)}{\sqrt{\theta}} \right)\right] \\
		&\leq \theta \left[\frac{1}{2}
		\normh{a_{\lambda}(\frac{\etl(w)}{\sqrt{\theta}})w}^{2} -
		\frac{1}{2} \normh{\frac{\etl(w)}{\sqrt{\theta}}}^{2} - \gamma
		\int F\left( \frac{\pel(w)}{\sqrt{\theta}}\right) \right] \\
		&\qquad - \gamma\theta\left(\theta^{(\alpha-2)/2} - 1\right) \int
		F\left( \frac{\pel(w)}{\sqrt{\theta}} \right) \\
		&\leq \theta \mathcal{E}_{\lambda}(w) - \gamma
		\theta\left(\theta^{(\alpha-2)/2} - 1\right) \int F\left(
		\frac{\pel(w)}{\sqrt{\theta}} \right) .
	\end{align*}

	We claim that for all $w \in \Sigma_{+}$ such that $\E(w) 
	< \frac{1}{2} (\frac{\lambda m}{2} + e(\lambda))$
	\begin{equation*}
		\int F\left( \frac{\pel(w)}{\sqrt{\theta}} \right) \geq \delta
		> 0.
	\end{equation*}
	Assuming the claim, let $w_{n} \in \Sigma_{+}$ be a sequence which
	minimize $\E(w)$, $\E(w_{n}) \to e(\lambda) < \frac{1}{2}
	(\frac{\lambda m}{2} + e(\lambda))$. We have that for all $\theta
	> 1$
	\begin{multline*}
		e(\theta\lambda) \leq \mathcal{E}_{\theta\lambda}(w_{n}) \leq
		\theta \mathcal{E}_{\lambda}(w_{n}) -
		\theta\left(\theta^{(\alpha-2)/2} - 1\right)\delta \\
		= \theta e(\lambda) - \theta\left(\theta^{(\alpha-2)/2} -
		1\right)\delta + o(1) < \theta e(\lambda)
	\end{multline*}
	and the proposition follows.

	To prove the claim, we assume it does not hold and that there is a
	sequence $w_{n} \in \Sigma_{+}$ such that $\E(w_{n}) <
	\frac{1}{2} (\frac{\lambda m}{2} + e(\lambda))$ and
	\begin{equation*}
		\int F\left( \frac{\pel(w_{n})}{\sqrt{\theta}} \right) \to 0.
	\end{equation*}
	Then also $\int F(\pel(w_{n})) \to 0$ and we deduce from
	\eqref{eq:ipotesiES} that for all $\zeta > 0$
	\begin{multline*}
		\labs{\int \scalar{\nabla F(\pel(w_{n}))}{w_{n}}} \leq \int
		(\zeta + C_{\zeta} F(\pel(w_{n}))^{\frac{1}{\xi}} ) \abs{\pel(w_{n})}
		\abs{w_{n}} \\
		\leq \zeta \norml{\pel(w_{n})} \norml{w_{n}} + C_{\zeta}
		\norml[\frac{2\xi}{\xi-1}]{\pel(w_{n})}
		\norml[\frac{2\xi}{\xi-1}]{w_{n}} \int F(\pel(w_{n})) \leq 
		C\zeta + o(1)
	\end{multline*}
	which implies that 
	\begin{equation*}
		\lim_{n \to +\infty} \labs{\int \scalar{\nabla
		F(\pel(w_{n}))}{w_{n}}} = 0
	\end{equation*}
	and similarly
	\begin{equation*}
		\lim_{n \to +\infty} \labs{\int \scalar{\nabla
		F(\pel(w_{n}))}{\etl(w_{n})}} = 0
	\end{equation*}
	Since 
	\begin{multline*}
		0 = -\norml{\etl(w_{n})}^{2}\normh{w_{n}}^{2} -
		\normh{\etl(w_{n})}^{2} \\
		- \gamma \int \scalar{\nabla F(\pel(w_{n}))}{-
		\frac{\norml{\etl(w_{n})}^{2}}
		{a_{\theta\lambda}(\etl(w_{n}))} w_{n} + \etl(w_{n})}
	\end{multline*}
	we deduce from the boundedness of $\normh{w_{n}}$ and
	$\normh{\etl(w_{n})}$ that $\normh{\etl(w_{n})} \to 0$ and hence
	\begin{align*}
		\frac{\theta}{2} (\frac{\lambda m}{2} &+ e(\lambda)) >
		\theta\E(w_{n}) \geq \E[\theta](w_{n}) \\
		&= \frac{\theta\lambda}{2}\normh{w_{n}}^{2}
		-\frac{1}{2} \norml{\etl(w_{n})}^{2}\normh{w_{n}}^{2} -
		\frac{1}{2} \normh{\etl(w_{n})}^{2} - \gamma \int F(\pel(w_{n}))\\
		&\geq \frac{\theta\lambda m}{2} + o(1),
	\end{align*}
	a contradiction which proves the claim.
\end{proof}

\begin{lem}
	\label{lem:stimedalbasso}
	Let $\omega \in (0,m)$, $C > 0$ and $\psi \in X$ be such that
	\begin{equation}
		\label{eq:critical}
		\begin{cases}
			dI(\psi)[h] = \omega \scalarl{\psi}{h} \qquad \text{for
			all } h \in X \\
			\normh{\psi} \leq C
		\end{cases}
	\end{equation}
	Then there exists two positive constants, $c_{1}$ and $c_{2}$,
	which depend only on $\omega$ and $C$, such that
	\begin{equation*}
		\normh{\psi} \geq c_{1} > 0, \qquad I(\psi) \geq c_{2} > 0.
	\end{equation*}
\end{lem}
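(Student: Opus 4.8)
The plan is to test the critical point equation \eqref{eq:critical} against the three natural directions $\psi$, $\Lambda_{+}\psi$ and $\Lambda_{-}\psi$, and to exploit the superlinearity of $\nabla F$ near the origin together with $\omega < m$. Writing $a = \normh{\Lambda_{+}\psi}$, $b = \normh{\Lambda_{-}\psi}$ and $N = \normh{\psi}$, so that $N^{2} = a^{2} + b^{2}$, the choices $h = \Lambda_{+}\psi$ and $h = \Lambda_{-}\psi$ give (using $\Lambda_{\mp}\Lambda_{\pm}\psi = 0$) the identities
\begin{align*}
	a^{2} - \omega\norml{\Lambda_{+}\psi}^{2} &= \gamma\Re\int\scalar{\nabla F(\psi)}{\Lambda_{+}\psi}, \\
	b^{2} + \omega\norml{\Lambda_{-}\psi}^{2} &= -\gamma\Re\int\scalar{\nabla F(\psi)}{\Lambda_{-}\psi},
\end{align*}
while $h = \psi$ yields $a^{2} - b^{2} = \omega\norml{\psi}^{2} + \gamma\Re\int\scalar{\nabla F(\psi)}{\psi}$. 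In particular, by \eqref{eq:ipotesiAR} the last right-hand side is nonnegative, so $a \geq b$ and hence $a^{2} \geq N^{2}/2$.

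First I would prove the lower bound $\normh{\psi} \geq c_{1}$; here one must of course assume $\psi \neq 0$, since $\psi = 0$ solves \eqref{eq:critical} trivially, so the statement is really a no-small-solution dichotomy. Since $\norml{\Lambda_{\pm}\psi}^{2} \leq \frac{1}{m}\normh{\Lambda_{\pm}\psi}^{2}$, the left-hand sides above are bounded below by $(1 - \frac{\omega}{m})a^{2}$ and by $b^{2}$ respectively. For the right-hand sides I would use \eqref{eq:stimegradfeps}, Hölder and the Sobolev embeddings (recall $\alpha \in (2,3)$), getting $\gamma\int\abs{\nabla F(\psi)}\abs{\Lambda_{\pm}\psi} \leq \bigl(\gamma\epsilon S_{2}^{2}N + \gamma\mu_{\epsilon}S_{\alpha}^{\alpha}N^{\alpha-1}\bigr)\normh{\Lambda_{\pm}\psi}$. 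Dividing by $a$ and $b$ and combining through $N^{2} = a^{2}+b^{2}$ gives $(1 - \frac{\omega}{m})N \leq \sqrt{2}\bigl(\gamma\epsilon S_{2}^{2}N + \gamma\mu_{\epsilon}S_{\alpha}^{\alpha}N^{\alpha-1}\bigr)$. Choosing $\epsilon$ small (depending only on $\omega$) so that the linear term is absorbed on the left, and then dividing by $N$, forces $N^{\alpha-2} \geq c(\omega) > 0$; since $\alpha > 2$ this is the bound $\normh{\psi} \geq c_{1}(\omega) > 0$.

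For the lower bound on $I$ I would first rewrite, using $h = \psi$ and \eqref{eq:ipotesiAR},
\begin{equation*}
	I(\psi) = \tfrac{1}{2}(a^{2} - b^{2}) - \gamma\int F(\psi) = \frac{\omega}{2}\norml{\psi}^{2} + \gamma\int\Bigl(\tfrac{1}{2}\scalar{\nabla F(\psi)}{\psi} - F(\psi)\Bigr) \geq \gamma\Bigl(\tfrac{\alpha}{2} - 1\Bigr)\int F(\psi),
\end{equation*}
so it suffices to bound $\int F(\psi)$ from below. From the positive-part identity and $a^{2} \geq N^{2}/2 \geq c_{1}^{2}/2$ I obtain $\gamma\Re\int\scalar{\nabla F(\psi)}{\Lambda_{+}\psi} \geq (1 - \frac{\omega}{m})c_{1}^{2}/2 > 0$. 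On the other hand, using \eqref{eq:ipotesiES} with $r = \frac{2\xi}{\xi-1}$ and a three-term Hölder inequality,
\begin{equation*}
	\Re\int\scalar{\nabla F(\psi)}{\Lambda_{+}\psi} \leq \zeta\norml{\psi}\norml{\Lambda_{+}\psi} + C_{\zeta}\Bigl(\int F(\psi)\Bigr)^{1/\xi}\norml[r]{\psi}\norml[r]{\Lambda_{+}\psi} \leq \zeta S_{2}^{2}C^{2} + C_{\zeta}S_{r}^{2}C^{2}\Bigl(\int F(\psi)\Bigr)^{1/\xi},
\end{equation*}
where I have used $\normh{\psi} \leq C$. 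Choosing $\zeta$ small (depending on $\omega$ and $C$) to absorb the first term into the lower bound above forces $\int F(\psi) \geq c_{3} > 0$, whence $I(\psi) \geq \gamma(\tfrac{\alpha}{2}-1)c_{3} =: c_{2}(\omega,C) > 0$.

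The crucial point, and the only place where both the hypothesis $\xi > 3$ and the a priori bound $\normh{\psi} \leq C$ are genuinely needed, is this last lower bound on $\int F(\psi)$. One cannot bound $\norml{\psi}$ from below directly, since the mass could in principle escape to high frequencies; instead I bound the positive-part nonlinear pairing $\int\scalar{\nabla F(\psi)}{\Lambda_{+}\psi}$ from below via the already-established $\normh{\psi} \geq c_{1}$, and from above via \eqref{eq:ipotesiES}. The condition $\xi > 3$ guarantees that $r = \frac{2\xi}{\xi-1} \in (2,3)$ lies in the admissible Sobolev range, and $\normh{\psi} \leq C$ is exactly what keeps the resulting constant depending only on $\omega$ and $C$. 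The remaining steps are routine applications of Hölder and the Sobolev inequalities.
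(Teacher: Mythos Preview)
Your argument is correct and follows essentially the same route as the paper: test the equation against $\Lambda_{\pm}\psi$, combine the superlinearity estimate \eqref{eq:stimegradfeps} with $\omega<m$ to obtain the norm lower bound, then rewrite $I(\psi)$ via \eqref{eq:ipotesiAR} and use \eqref{eq:ipotesiES} to bound $\int F(\psi)$ from below. The only cosmetic difference is in this last step: the paper argues by contradiction (a sequence with $\int F(\psi_n)\to 0$ would force $\normh{\psi_n}\to 0$ via \eqref{eq:ipotesiES}, contradicting the first part), whereas you give the equivalent direct quantitative version by sandwiching $\gamma\Re\int\scalar{\nabla F(\psi)}{\Lambda_{+}\psi}$ between $(1-\tfrac{\omega}{m})c_{1}^{2}/2$ and the \eqref{eq:ipotesiES}-bound.
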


\begin{proof}
	Taking $h = \Lambda_{+}\psi - \Lambda_{-}\psi$ in
	\eqref{eq:critical} and $\epsilon > 0$ we have
	\begin{align*}
		\normh{\Lambda_{+}\psi}^{2} &+ \normh{\Lambda_{-}\psi}^{2} + 
		\omega\norml{\Lambda_{-}\psi}^{2} = 
		\omega\norml{\Lambda_{+}\psi}^{2} + \int \scalar{\nabla 
		F(\psi)}{\Lambda_{+}\psi - \Lambda_{-}\psi} \\
		&\leq \frac{\omega}{m}\normh{\Lambda_{+}\psi}^{2} + \int
		(\epsilon\abs{\psi} + \mu_{\epsilon}\abs{\psi}^{\alpha-1}
		)(\abs{\Lambda_{+}\psi} + \abs{\Lambda_{-}\psi}) \\
		&\leq \frac{\omega}{m}\normh{\Lambda_{+}\psi}^{2} +
		2\epsilon\norml{\psi}^{2} +
		\mu_{\epsilon}\norml[\alpha]{\psi}^{\alpha-1}
		(\norml[\alpha]{\Lambda_{+}\psi} +
		\norml[\alpha]{\Lambda_{-}\psi}) \\
		&\leq \frac{\omega}{m}\normh{\Lambda_{+}\psi}^{2} + 2\epsilon
		C_{2}^{2}\normh{\psi}^{2} +
		\mu_{\epsilon}C_{\alpha}^{\alpha}\normh{\psi}^{\alpha}.
	\end{align*} 
	If $\epsilon > 0$ is such that $2\epsilon C_{2}^{2} < \frac{1 -
	\frac{\omega}{m}}{2}$ we deduce that
	\begin{equation*}
		\frac{1}{2}\left(1 - \frac{\omega}{m} \right) \normh{\psi}^{2}
		\leq \mu_{\epsilon}C_{\alpha}^{\alpha}\normh{\psi}^{\alpha}
	\end{equation*}
	and 
	\begin{equation*}
		\normh{\psi}^{\alpha-2} \geq
		\frac{1}{2\mu_{\epsilon}C_{\alpha}^{\alpha}} \left(1 -
		\frac{\omega}{m} \right).
	\end{equation*}
	Let us now estimate the critical level:
	\begin{align*}
		I(\psi) &= I(\psi) - \frac{1}{2} dI(\psi)[\psi] + \frac{1}{2} 
		\omega \norml{\psi}^{2}\\
		&= \int \left(\frac{1}{2} \scalar{\nabla F(\psi)}{\psi} -
		F(\psi)\right) + \frac{1}{2} \omega \norml{\psi}^{2} \\
		&\geq \left(\frac{\alpha}{2} - 1\right) \int F(\psi) +
		\frac{1}{2} \omega \norml{\psi}^{2}.
	\end{align*}
	We claim that there is $\delta > 0$ such that $\int F(\psi) \geq
	\delta > 0$ for all $\psi$ for which \eqref{eq:critical} holds. If
	not, we find a sequence $\psi_{n}$ of solutions such that $\int
	F(\psi_{n}) \to 0$. From
	\begin{equation*}
		0 = dI(\psi_{n})[\Lambda_{\pm}\psi_{n}] - \omega
		\norml{\Lambda_{\pm}\psi_{n}}^{2}
	\end{equation*}
	we deduce that  
	\begin{align*}
		&\normh{\Lambda_{+}\psi_{n}}^{2} - \omega \norml{\Lambda_{+}
		\psi_{n}}^{2} = \int \scalar{\nabla F
		(\psi_{n})}{\Lambda_{-}\psi_{n}} \\
		&\normh{\Lambda_{-}\psi_{n}}^{2} + \omega \norml{\Lambda_{-} \psi_{n}}^{2} = 
		-\int \scalar{\nabla F (\psi_{n})}{\Lambda_{-}\psi_{n}}.
	\end{align*}
	As in the proof of \ref{prop:stimee}, using \eqref{eq:ipotesiES} 
	we can deduce from $\int F(\psi_{n}) \to 0$ that 
	\begin{equation*}
		\int \scalar{\nabla F (\psi_{n})}{\Lambda_{\pm}\psi_{n}} \to 0
	\end{equation*}
	and hence also 
	\begin{equation*}
		\normh{\psi_{n}} \to 0,
	\end{equation*}
	contradiction which proves the claim.
\end{proof}

\begin{proof}[Proof of theorem]

By Ekeland's variational principle, there exists a sequence $w_{n} \in
\Sigma_{+}$ such that 
\begin{equation*}
	\Eu(w_{n}) \to e(1), \qquad \sup_{v \in \Sigma_{+}}
	\labs{d\Eu(w_{n})[v]} \to 0.
\end{equation*}
Since $\Eu(w_{n}) \to e(1)$ we deduce from lemma \ref{lem:stime_max}
that the sequence $w_{n}$ is bounded. Follows from proposition
\ref{lem:stime_grad} that also $\eta_{n} = \eta(w_{n})$ and $\psi_{n}
= a(\eta_{n}) w_{n} + \eta_{n}$ are bounded in $X$. Letting
$\omega_{n} = a(\eta_{n})^{-1}dI(\psi_{n})[w_{n}] \geq
(1-C_{\alpha,1})$ we have that
\begin{equation*}
	dI(\psi_{n})[h] - \omega_{n} \scalarl{\psi_{n}}{h} =  
	\frac{1}{a(\eta_{n})} d\E(w_{n})[v] \to 0 \qquad
	\text{for all } h = v + \xi \in X.
\end{equation*}

We can assume that (up to a subsequence) $\psi_{n} \tow \psi$ in $X$
and that $\omega_{n} \to \omega$, with $\omega \in (1-C_{\alpha,1},
2\Eu(w_{n})) \subset (1-C_{\alpha,1}, m)$ for $n$ sufficiently large.
Then we have that for all $h \in X$
\begin{multline*}
	dI(\psi_{n})[h] - \omega_{n} \scalarl{\psi_{n}}{h} =
	\scalarh{\psi_{n}}{\Lambda_{+} h} - \scalarh{\psi_{n}}{\Lambda_{-}
	h} \\
	- \gamma \int \scalar{\nabla F(\psi_{n})}{h} - \omega_{n}
	\scalarl{\psi_{n}}{h} \to 0
\end{multline*}
since we have that
\begin{equation*}
	\int \scalar{\nabla F(\psi_{n}) - \nabla F(\psi)}{h} \to 0
\end{equation*}
we deduce that
\begin{equation*}
	dI(\psi)[h] - \omega \scalarl{\psi}{h} = 0 \qquad \text{for all }
	h \in X.
\end{equation*}
The weak convergence does not, however, preserve the $L^{2}$ norm, so 
we only know that $\norml{\psi} \leq \norml{\psi_{n}} = 1$ (it could 
even be that $\psi = 0$). 

To conclude we will now apply the concentration-compactness principle,
see \cite{Lions_1984, Lions_1984-1}. First of all let us show that no
vanishing occurs. By contradiction, assume that
\begin{equation*}
	\limsup_{n \to +\infty} \sup_{y \in \mathbb{R}^{3}} \int_{B(y,1)} 
	\abs{\psi_{n}}^{2} = 0.
\end{equation*}
Then we know, see \cite{Lions_1984} or \cite[Lemma 1.21]{Willem_1996},
that $\psi_{n} \to 0$ in $L^{p}(\mathbb{R}^{3})$ for $2 < p < 3$.
Then for any $\epsilon > 0$
\begin{equation*}
	\labs{\int \scalar{\nabla F(\psi_{n})}{w_{n}}} \leq \epsilon \int
	\abs{\psi_{n}} \abs{w_{n}} + \mu_{\epsilon} \int
	\abs{\psi_{n}}^{\alpha-1} \abs{w_{n}} \leq \epsilon +
	\mu_{\epsilon} \norml[\alpha]{\psi_{n}}^{\alpha-1}
	\norml[\alpha]{w_{n}}
\end{equation*}
and
\begin{equation*}
	\lim_{n \to +\infty} \labs{\int \scalar{\nabla 
	F(\psi_{n})}{w_{n}}} = 0.
\end{equation*}
We deduce from
\begin{equation*}
	\epsilon_{n} = dI(\psi_{n})[w_{n}] - \omega_{n}
	a(\eta_{n})\norml{w_{n}}^{2} = a(\eta_{n})(\normh{w_{n}}^{2} - 
	\omega_{n} \norml{w_{n}}^{2}) -\int \scalar{\nabla
	F(\psi_{n})}{w_{n}} 
\end{equation*}
that for all $\lambda \in (0,1]$
\begin{equation*}
	\epsilon_{n} = a(\eta_{n})(\normh{w_{n}}^{2} - \omega_{n}
	\norml{w_{n}}^{2}) \geq a(\eta_{n})(m - \omega_{n})
	\norml{w_{n}}^{2} \geq \sqrt{\frac{1}{2}}( m - \omega_{n}) > 0
\end{equation*}
a contradiction since, by proposition \ref{prop:punticriticiE} and
lemma \ref{lem:stimAe} we have that $\omega_{n} \leq 2\Eu(w_{n})$
and $e(1) < \frac{m}{2}$. Hence vanishing does not occur.

Since $\normh{\psi} \leq \normh{\psi_{n}}$ and $I(\psi) \geq c_{2}$
(by lemma \ref{lem:stimedalbasso}) we deduce from the concentration
compactness principle that there exists $p \geq 1$ functions
$\phi_{1}$, \ldots, $\phi_{p} \in X$, critical points for $I$ under
the constraint $\norml{\psi}^{2} = \mu_{i} \in (0,1]$ (satisfying
\eqref{eq:equazionemoltiplicatore} with $\omega = \lim_{n}\omega_{n} >
0$) and $p$ sequences of points $x_{i, n} \in \mathbb{R}^{3}$, $i =
1$, \ldots, $p$ such that $\abs{x_{i,n} - x_{j,n}} \to +\infty$ for
all $i \neq j$ as $n \to +\infty$ and
\begin{equation*}
	\normh{\psi_{n} - \sum_{i = i}^{^{p}} \phi_{i}(\cdot - x_{i,n})} 
	\to 0 \quad \text{as } n \to +\infty.
\end{equation*}
From this follows also that $\norml{\psi_{n}}^{2} = 1 = \sum_{i =
1}^{p} \mu_{i}$.

We then observe that
\begin{align*}
	\normh{\Lambda_{+}\psi_{n}}^{2} - &\normh{\Lambda_{-}\psi_{n}}^{2}
	= \scalarh{\psi_{n}}{\Lambda_{+}\psi_{n} - \Lambda_{-}\psi_{n}}
	\\
	&=\scalarh{\psi_{n} - \sum_{i = i}^{^{p}} \phi_{i}(\cdot -
	x_{i,n})}{\Lambda_{+}\psi_{n} - \Lambda_{-}\psi_{n}} \\
	&\qquad + \sum_{i = i}^{^{p}} \scalarh{\phi_{i}(\cdot -
	x_{i,n})}{\Lambda_{+}\psi_{n} - \Lambda_{-}\psi_{n}} \\
	&= \sum_{i = i}^{^{p}} \left(\scalarh{\Lambda_{+}\phi_{i}(\cdot -
	x_{i,n})}{\psi_{n}} - \scalarh{\Lambda_{-}\phi_{i}(\cdot -
	x_{i,n})}{\psi_{n}}\right) + o(1) \\
	&= \sum_{i = i}^{^{p}} \left(\normh{\Lambda_{+}\phi_{i}}^{2} -
	\normh{\Lambda_{-}\phi_{i}}^{2} \right) + o(1)
\end{align*}
and also
\begin{equation*}
	\int F(\psi_{n}) = \sum_{i=1}^{p} \int F(\phi_{i}(x)) + o(1).
\end{equation*}
(this can be deduced from the Brezis-Lieb lemma, see \cite[Theorem
2]{Brezis_Lieb_1983}). Finally we have that
\begin{equation}
	\label{eq:Isispezza}
	e(1) = I(\psi_{n})+ o(1) = \sum_{i=1}^{p} I(\phi_{i}) + o(1)
\end{equation}
and
\begin{equation*}
	0 = dI(\phi_{i})[h] - \omega \scalarl{\phi_{i}}{h} \qquad
	\text{for all } h \in X,
\end{equation*}
with $\omega > 0$. Follows from proposition \ref{prop:punticriticiE}
that $w_{i} = \norml{\Lambda_{+}\phi_{i}}^{-1} \Lambda_{+}\phi_{i} \in
\Sigma_{+}$ is a critical point for $\mathcal{E}_{\mu_{i}}$ and
$\mathcal{E}_{\mu_{i}}(w_{i}) = I(\phi_{i})$.

Since
\begin{equation*}
	\mathcal{E}_{\mu_{i}}(w_{i}) \geq e(\mu_{i}) 
\end{equation*}
we deduce from \eqref{eq:stimee} that if $p > 1$ (i.e. if dichotomy 
occurs)
\begin{multline*}
	e(1) = \sum_{i=1}^{p} I(\phi_{i}) + o(1) = \sum_{i=1}^{p}
	\mathcal{E}_{\mu_{i}}(w_{i}) + o(1) \geq \sum_{i=1}^{p} 
	e(\mu_{i}) + o(1)
	\\
	> \sum_{i=1}^{p} \mu_{i} e(\frac{1}{\mu_{i}}\mu_{i}) + o(1) =
	e(1)\sum_{i = 1}^{p} \mu_{i} + o(1),
\end{multline*}
a contradiction.
	
Since there is no vanishing or dichotomy, our sequence $\psi_{n}$
converges -- up to a translation -- strongly in $X$ to a critical
point $\psi \in X$ of \eqref{eq:equazionemoltiplicatore} such that
$\norml{\psi} = 1$ and the theorem follows.

\end{proof}

\subsection*{Acknowledgment} The authors sincerely thank the referee
for several useful comments and in particular for pointing out a
mistake in a first version of the manuscript.

\providecommand{\noopsort}[1]{} \providecommand{\cprime}{$'$}
  \providecommand{\scr}{} \def\ocirc#1{\ifmmode\setbox0=\hbox{$#1$}\dimen0=\ht0
  \advance\dimen0 by1pt\rlap{\hbox to\wd0{\hss\raise\dimen0
  \hbox{\hskip.2em$\scriptscriptstyle\circ$}\hss}}#1\else {\accent"17 #1}\fi}

\end{document}